\documentclass{amsart}
\usepackage{setspace}
\usepackage{a4}
\usepackage{amsthm,latexsym}
\usepackage{amsfonts}
\usepackage{graphicx}
\usepackage{textcomp}
\usepackage{cite}
\usepackage{enumerate}
\usepackage[mathscr]{euscript}
\usepackage{mathtools}
\usepackage{amssymb}
\usepackage{hyperref}
\usepackage{amsmath}
\usepackage{tikz}
\newtheorem{theorem}{Theorem}[section]

\newtheorem{conjecture}[theorem]{Conjecture}
\newtheorem{corollary}[theorem] {Corollary}
\newtheorem{definition}[theorem]{Definition}

\newtheorem{question}[theorem]{Question}
\setlength{\parindent}{0pt} \setlength{\evensidemargin}{0.3cm}
\setlength{\oddsidemargin}{0.3cm} \setlength{\topmargin}{-2cm}
\textwidth 16cm \textheight 23cm
\onehalfspacing
\title{This is the title}

\raggedbottom


\usepackage{fancyhdr}

\pagestyle{fancy}
\fancyhead[LO]{\textbf{CONTINUOUS NON-ARCHIMEDEAN AND p-ADIC  WELCH BOUNDS}}
\fancyhead[RE]{\textbf{K. MAHESH KRISHNA}}

\begin{document}
\hrule\hrule\hrule\hrule\hrule
\vspace{0.3cm}	
\begin{center}
{\bf{CONTINUOUS NON-ARCHIMEDEAN AND p-ADIC  WELCH BOUNDS}}\\
\vspace{0.3cm}
\hrule\hrule\hrule\hrule\hrule
\vspace{0.3cm}
\textbf{K. MAHESH KRISHNA}\\
Post Doctoral Fellow \\
Statistics and Mathematics Unit\\
Indian Statistical Institute, Bangalore Centre\\
Karnataka 560 059, India\\
Email: kmaheshak@gmail.com\\

Date: \today
\end{center}

\hrule\hrule
\vspace{0.5cm}
\textbf{Abstract}: We prove the continuous non-Archimedean (resp. p-adic) Banach space and Hilbert space  versions of non-Archimedean  (resp. p-adic) Welch bounds proved by M. Krishna. We formulate continuous non-Archimedean  and p-adic functional  Zauner conjectures.

\textbf{Keywords}:  Non-Archimedean valued field, Non-Archimedean Banach space, p-adic number field, p-adic Banach  space, Welch bound,  Zauner conjecture. 

\textbf{Mathematics Subject Classification (2020)}: 12J25, 46S10, 47S10, 11D88, 43A05.\\

\hrule


\section{Introduction}
Prof. L. Welch proved the following bounds in  1974 which appears in everyday life \cite{WELCH}.
  \begin{theorem}\cite{WELCH}\label{WELCHTHEOREM} (\textbf{Welch Bounds})
	Let $n> d$.	If	$\{\tau_j\}_{j=1}^n$  is any collection of  unit vectors in $\mathbb{C}^d$, then
	\begin{align*}
		\sum_{1\leq j,k \leq n}|\langle \tau_j, \tau_k\rangle |^{2m}=\sum_{j=1}^n\sum_{k=1}^n|\langle \tau_j, \tau_k\rangle |^{2m}\geq \frac{n^2}{{d+m-1\choose m}}, \quad \forall m \in \mathbb{N}.
	\end{align*}
	In particular,
	\begin{align*}
	\sum_{1\leq j,k \leq n}|\langle \tau_j, \tau_k\rangle |^{2}=		\sum_{j=1}^n\sum_{k=1}^n|\langle \tau_j, \tau_k\rangle |^{2}\geq \frac{n^2}{{d}}.
	\end{align*}
	Further, 
	\begin{align*}
		\text{(\textbf{Higher order Welch bounds})}	\quad		\max _{1\leq j,k \leq n, j\neq k}|\langle \tau_j, \tau_k\rangle |^{2m}\geq \frac{1}{n-1}\left[\frac{n}{{d+m-1\choose m}}-1\right], \quad \forall m \in \mathbb{N}.
	\end{align*}
	In particular,
	\begin{align*}
		\text{(\textbf{First order Welch bound})}\quad 	\max _{1\leq j,k \leq n, j\neq k}|\langle \tau_j, \tau_k\rangle |^{2}\geq\frac{n-d}{d(n-1)}.
	\end{align*}
\end{theorem}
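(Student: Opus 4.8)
The plan is to reduce all four inequalities to a single spectral estimate on a Gram matrix, after passing to symmetric tensor powers in order to absorb the exponent $2m$.

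First I would introduce, for fixed $m$, the symmetric tensor powers $\tau_j^{\otimes m}$ lying in the symmetric subspace $\mathrm{Sym}^m(\mathbb{C}^d) \subseteq (\mathbb{C}^d)^{\otimes m}$. The key algebraic identity is $\langle \tau_j^{\otimes m}, \tau_k^{\otimes m}\rangle = \langle \tau_j, \tau_k\rangle^m$, which gives $|\langle \tau_j^{\otimes m}, \tau_k^{\otimes m}\rangle|^2 = |\langle \tau_j, \tau_k\rangle|^{2m}$, and each $\tau_j^{\otimes m}$ is again a unit vector. The crucial structural fact is that $\dim \mathrm{Sym}^m(\mathbb{C}^d) = \binom{d+m-1}{m}$, which is exactly the quantity appearing in the denominator of the bound.

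Next I would form the $n \times n$ Gram matrix $G$ with entries $G_{jk} = \langle \tau_j^{\otimes m}, \tau_k^{\otimes m}\rangle$. This matrix is Hermitian and positive semidefinite, its diagonal entries are all $1$ so that $\operatorname{tr}(G) = n$, and its Hilbert--Schmidt norm satisfies $\|G\|_{\mathrm{HS}}^2 = \operatorname{tr}(G^2) = \sum_{j,k} |\langle \tau_j, \tau_k\rangle|^{2m}$. Because the $n$ vectors $\tau_j^{\otimes m}$ all lie in a space of dimension $D := \binom{d+m-1}{m}$, the rank of $G$ is at most $D$, so $G$ has at most $D$ nonzero eigenvalues $\lambda_i \geq 0$. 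Applying Cauchy--Schwarz to these nonzero eigenvalues gives $n^2 = (\operatorname{tr} G)^2 = \left(\sum_i \lambda_i\right)^2 \leq D \sum_i \lambda_i^2 = D\, \operatorname{tr}(G^2)$, which upon rearrangement is precisely the first displayed bound; the special case $m=1$, $D=d$ yields the second.

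Finally, for the higher-order bounds I would split the double sum into diagonal and off-diagonal parts: the diagonal contributes exactly $n$ since the $\tau_j$ are unit vectors, while the $n(n-1)$ off-diagonal terms are each at most $\max_{j \neq k} |\langle \tau_j, \tau_k\rangle|^{2m}$. Combining $n + n(n-1)\max_{j\neq k}|\langle \tau_j,\tau_k\rangle|^{2m} \geq n^2/D$ and solving for the maximum yields the higher-order Welch bound, with the first-order version recovered by setting $m=1$, $D=d$. I expect the main obstacle to be conceptual rather than computational: recognizing that the exponent $2m$ should be handled by the symmetric tensor construction, and correctly identifying its dimension as $\binom{d+m-1}{m}$. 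Once that identification is in place, the remaining steps are a routine rank bound together with Cauchy--Schwarz.
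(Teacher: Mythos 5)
Your proof is correct and is the standard argument for the Welch bounds. The paper itself does not prove Theorem \ref{WELCHTHEOREM} (it is quoted from Welch's original work), but your strategy --- passing to symmetric tensor powers of dimension $\binom{d+m-1}{m}$, bounding $(\operatorname{tr}G)^2 \leq \operatorname{rank}(G)\operatorname{tr}(G^2)$ by Cauchy--Schwarz on the nonzero eigenvalues, and then splitting the double sum into diagonal and off-diagonal parts --- is precisely the template the paper uses for its own continuous non-Archimedean and $p$-adic analogues (Theorems \ref{WELCHNON1}, \ref{WELCHNON11F}, \ref{WELCHNONQ}, \ref{WELCHNON2}), the only cosmetic difference being that the paper works with the frame operator $S_{f,\tau}$ rather than the Gram matrix, which has the same nonzero spectrum.
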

  Theorem \ref{WELCHTHEOREM} is a fundamental result in many areas such as     in the study of root-mean-square (RMS) absolute cross relation of unit vectors   \cite{SARWATEMEETING}, frame potential \cite{BENEDETTOFICKUS, CASAZZAFICKUSOTHERS, BODMANNHAASPOTENTIAL}, 
 correlations \cite{SARWATE},  codebooks \cite{DINGFENG}, numerical search algorithms  \cite{XIA, XIACORRECTION}, quantum measurements 
\cite{SCOTTTIGHT}, coding and communications \cite{TROPPDHILLON, STROHMERHEATH}, code division multiple access (CDMA) systems \cite{CHEBIRA1, CHEBIRA2}, wireless systems \cite{YATES}, compressed/compressive sensing \cite{TAN, VIDYASAGAR, FOUCARTRAUHUT, ELDARKUTYNIOK, BAJWACALDERBANKMIXON, TROPP, SCHNASSVANDERGHEYNST, ALLTOP},  `game of Sloanes' \cite{JASPERKINGMIXON}, equiangular tight frames \cite{SUSTIKTROPP}, equiangular lines \cite{MIXONSOLAZZO, COUTINHOGODSILSHIRAZIZHAN, FICKUSJASPERMIXON, IVERSONMIXON2022, LEMMENSSEIDEL, JIANGTIDORYAOZHAOZHAO, GREAVESKOOLENMUNEMASASZOLLOSI, BALLADRAXLERKEEVASHSUDAKOV, BUKH, DECAEN, GLAZYRINYU, BARGYU, JIANGPOLYANSKII, NEUMAIER, GREAVESSYATRIADIYATSYNA, GODSILROY, CALDERBANKCAMERONKANTORSEIDEL, OKUDAYU, YU, ETTAOUI, COHNKUMARMINTON, GREAVESIVERSONJASPERMIXON1, GREAVESIVERSONJASPERMIXON2}, digital fingerprinting \cite{MIXONQUINNKIYAVASHFICKUS}  etc.

 Theorem \ref{WELCHTHEOREM}  has been upgraded/different proofs were  given   in \cite{CHRISTENSENDATTAKIM, DATTAWELCHLMA, WALDRONSH, WALDRON2003, DATTAHOWARD, ROSENFELD, HAIKINZAMIRGAVISH, EHLEROKOUDJOU, STROHMERHEATH}.  In 2021 M. Krishna derived continuous version of  Theorem \ref{WELCHTHEOREM} \cite{MAHESHKRISHNA}. In 2022 M. Krishna obtained Theorem \ref{WELCHTHEOREM} for Hilbert C*-modules \cite{MAHESHKRISHNA2}, Banach spaces \cite{MAHESHKRISHNA3},  non-Archimedean Hilbert spaces  \cite{MAHESHKRISHNA4}, p-adic Hilbert spaces \cite{MAHESHKRISHNA5} non-Archimedean Banach spaces and p-adic Banach spaces \cite{MAHESHKRISHNA6}.
 
In this paper we derive continuous non-Archimedean (resp. p-adic) Banach space  version  of non-Archimedean (resp. p-adic) functional Welch bounds in Theorems \ref{WELCHNON11F} and  (resp. Theorems  \ref{WELCHNON2}). We formulate  continuous non-Archimedean   Zauner conjectures (Conjectures \ref{NZF} and \ref{NZF2}) and continuous p-adic  Zauner conjectures (Conjectures  \ref{NZ} and \ref{NZ2}).

 \section{Continuous non-Archimedean  Welch bounds}
 In this section we derive continuous non-Archimedean Banach space version of results derived  in \cite{MAHESHKRISHNA6}. Let $\mathbb{K}$ be a non-Archimedean (complete) valued field satisfying 
 \begin{align}\label{FU}
 	\left|\sum_{j=1}^{n}\lambda_j^2\right|=\max_{1\leq j \leq n}|\lambda_j|^2, \quad \forall \lambda_j \in \mathbb{K}, 1\leq j \leq n, \forall n \in \mathbb{N}.
 \end{align}
 For examples of such fields, we refer   \cite{PEREZGARCIASCHIKHOF}. Throughout this section,  we assume that our non-Archimedean field $\mathbb{K}$ satisfies Equation (\ref{FU}). Letter $\mathcal{X}$ stands for a  $d$-dimensional non-Archimedean Banach space over $\mathbb{K}$. Identity operator on $\mathcal{X}$ is denoted by $I_\mathcal{X}$. The dual of $\mathcal{X}$ is denoted by $\mathcal{X}^*$. 
 In the entire paper, $G$ denotes a locally compact group and $\mu_G$ denotes a left Haar measure on $G$. We assume throughout the paper that the diagonal  $\Delta \coloneqq \{(g,g):g \in G\}$ is measurable in $G\times G$. All our integrals are weak non-Archimedean Riemann integrals (see \cite{ROGERS}). 
  \begin{theorem}\label{WELCHNON1}
 	\textbf{(First Order  Continuous Non-Archimedean Functional Welch Bound)} 	Let $\mathcal{X}$ be a $d$-dimensional non-Archimedean Banach space over $\mathbb{K}$.	  Let  $\{\tau_g\}_{g\in G}$ be a   collection in $\mathcal{X}$ and $\{f_g\}_{g\in G}$ be a  collection in $\mathcal{X}^*$ satisfying following conditions.
 	\begin{enumerate}[\upshape (i)]
 		\item For every $x\in \mathcal{X}$ and for every $\phi \in \mathcal{X}^*$, the map 
 		\begin{align*}
 			G \ni g \mapsto f_g(x)\phi(\tau_g) \in \mathcal{X}
 		\end{align*}
 	is measurable and integrable. 
 		\item The map 
 		\begin{align*}
 			S_{f,\tau}:\mathcal{X} \ni x \mapsto \int\limits_{G} f_g(x)\tau_g\,d\mu_G(g) \in \mathcal{X}
 		\end{align*}
 	is a well-defined bounded linear operator.
 	\item The operator $S_{f, \tau}$	is diagonalizable.
 	\item 
 	\begin{align*}
 	\int\limits_{G\times G}|f_g(\tau_h)f_h(\tau_g)|\,d(\mu_G\times \mu_G) (g,h)<\infty.
 	\end{align*}
 	\end{enumerate}
 Then 
 	\begin{align*}
 		\max\left \{\left| \int\limits_{\Delta}f_g(\tau_g)^2 \,d(\mu_G\times \mu_G)(g, g)\right|, \sup_{g, h \in G, g \neq h}|f_g(\tau_h)f_h(\tau_g)|\right\}\geq \frac{1}{|d|}	\left|\int\limits_{G}f_g(\tau_g)\,d\mu_G(g) \right|^2.	
 	\end{align*}
 	In particular, if $f_g(\tau_g) =1$ for all $g\in G$, then 
 	\begin{align*}
 		\text{\textbf{(First order  continnuous non-Archimedean functional Welch bound)}} \\
 		 \max\left\{|(\mu_G\times \mu_G)(\Delta)|, \sup_{g, h \in G, g \neq h}|f_g(\tau_h)f_h(\tau_g)| \right\}\geq \frac{|\mu(G)|^2}{|d|}.	
 	\end{align*}
 \end{theorem}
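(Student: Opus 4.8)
The plan is to run the non-Archimedean analogue of the classical trace argument for the frame operator, with the Cauchy--Schwarz step replaced by the strong triangle inequality together with hypothesis \eqref{FU}. Write $S \coloneqq S_{f,\tau}$ for the operator in (ii). First I would record the two trace identities. Since the integral defining $S$ is a weak non-Archimedean integral and the trace is a continuous linear functional on the (finite-dimensional) operator space, interchanging trace and integral and using that the trace of the rank-one map $x \mapsto f_g(x)\tau_g$ equals $f_g(\tau_g)$ gives
\[
\operatorname{Tr}(S)=\int_G f_g(\tau_g)\,d\mu_G(g).
\]
Computing $S^2x=\int_G\int_G f_g(x)f_h(\tau_g)\tau_h\,d\mu_G(g)\,d\mu_G(h)$ (using weak-integral linearity to pull $f_h$ inside the inner integral) and taking traces termwise, the trace of $x\mapsto f_g(x)f_h(\tau_g)\tau_h$ is $f_g(\tau_h)f_h(\tau_g)$, so
\[
\operatorname{Tr}(S^2)=\int_{G\times G} f_g(\tau_h)f_h(\tau_g)\,d(\mu_G\times\mu_G)(g,h).
\]

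Next I would pass to eigenvalues. By hypothesis (iii), $S$ is diagonalizable over $\mathbb{K}$, so it has eigenvalues $\lambda_1,\dots,\lambda_d\in\mathbb{K}$ with $\operatorname{Tr}(S)=\sum_{j=1}^d\lambda_j$ and $\operatorname{Tr}(S^2)=\sum_{j=1}^d\lambda_j^2$. Hypothesis \eqref{FU} applied to the $\lambda_j$ gives $\abs{\operatorname{Tr}(S^2)}=\max_{1\le j\le d}\abs{\lambda_j}^2$, while the strong triangle inequality gives $\abs{\operatorname{Tr}(S)}\le\max_{1\le j\le d}\abs{\lambda_j}$; squaring yields $\abs{\operatorname{Tr}(S)}^2\le\abs{\operatorname{Tr}(S^2)}$. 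I would also note that taking $\lambda_1=\dots=\lambda_d=1$ in \eqref{FU} forces $\abs{d}=1$, so $\tfrac1{\abs{d}}=1$ and hence $\tfrac1{\abs{d}}\abs{\operatorname{Tr}(S)}^2\le\abs{\operatorname{Tr}(S^2)}$.

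Then I would split the double integral along the measurable diagonal $\Delta$. On $\Delta$ the integrand $f_g(\tau_h)f_h(\tau_g)$ equals $f_g(\tau_g)^2$, so $\operatorname{Tr}(S^2)=A+C$ with $A=\int_\Delta f_g(\tau_g)^2\,d(\mu_G\times\mu_G)(g,g)$ and $C$ the integral over $(G\times G)\setminus\Delta$, which is finite by (iv). The strong triangle inequality gives $\abs{\operatorname{Tr}(S^2)}\le\max\{\abs{A},\abs{C}\}$, and the standard norm estimate for the non-Archimedean integral bounds $\abs{C}\le\sup_{g\ne h}\abs{f_g(\tau_h)f_h(\tau_g)}$. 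Chaining these inequalities produces
\[
\tfrac1{\abs{d}}\left|\int_G f_g(\tau_g)\,d\mu_G(g)\right|^2\le\abs{\operatorname{Tr}(S^2)}\le\max\left\{\abs{A},\ \sup_{g\ne h}\abs{f_g(\tau_h)f_h(\tau_g)}\right\},
\]
which is the asserted bound; the ``in particular'' statement then follows by substituting $f_g(\tau_g)=1$, so that $\operatorname{Tr}(S)=\mu(G)$ and $A=(\mu_G\times\mu_G)(\Delta)$.

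The algebraic core (the two inequalities on the eigenvalues) is immediate once \eqref{FU} is in hand, so the main obstacle is analytic: rigorously justifying the two trace identities within the weak non-Archimedean Riemann integral framework---namely the interchange of the trace with the operator-valued integral, the Fubini-type manipulation giving the formula for $\operatorname{Tr}(S^2)$, and the additive splitting of the double integral across $\Delta$ and its complement---together with the integral estimate controlling $\abs{C}$. These are precisely the points at which hypotheses (i)--(iv) and the measurability of $\Delta$ are consumed.
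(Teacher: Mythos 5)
Your proposal is correct and follows essentially the same route as the paper: the same two trace identities for $S_{f,\tau}$ and $S_{f,\tau}^2$, passage to the eigenvalues of the diagonalizable operator, and the same splitting of $\operatorname{Tra}(S_{f,\tau}^2)$ across the diagonal $\Delta$ followed by the ultrametric estimate and the sup bound on the off-diagonal integral. The only cosmetic difference is the middle inequality: the paper invokes the non-Archimedean Cauchy--Schwarz inequality to get $\left|\sum_k\lambda_k\right|^2\leq |d|\left|\sum_k\lambda_k^2\right|$, whereas you obtain the same bound from the strong triangle inequality together with Equation (\ref{FU}) and the observation that (\ref{FU}) forces $|d|=1$ --- a valid and arguably more self-contained justification of the same step.
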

 \begin{proof}
 	We first note that 
 	\begin{align*}
 		&\operatorname{Tra}(S_{f, \tau})=\int\limits_{G}f_g(\tau_g)\,d\mu_G(g) , \\
 		& \operatorname{Tra}(S^2_{f, \tau})=\int\limits_{G}\int\limits_{G}f_g(\tau_h)f_h(\tau_g)\,d\mu_G(g)\,d\mu_G(h).
 	\end{align*}	
 	Let $\lambda_1, \dots, \lambda_d$ be the diagonal entries in the diagonalization of   $S_{f, \tau}$. Then  using the diagonalizability of $	S_{f, \tau}$ and the non-Archimedean  Cauchy-Schwarz inequality (Theorem 2.4.2 \cite{PEREZGARCIASCHIKHOF}), we get 
 		\begin{align*}
 		\left|\int\limits_{G}f_g(\tau_g)\,d\mu_G(g) \right|^2&=|\operatorname{Tra}(S_{f, \tau})|^2=\left|\sum_{k=1}^d
 		\lambda_k\right|^2\leq |d| 	\left|\sum_{k=1}^d\lambda_k^2 \right|=|d||\operatorname{Tra}(S^2_{f, \tau})|\\
 		&=|d|\left|\int\limits_{G}\int\limits_{G}f_g(\tau_h)f_h(\tau_g)\,d\mu_G(g)\,d\mu_G(h)\right|=|d|\left|\int\limits_{G\times G}f_g(\tau_h)f_h(\tau_g)\,d(\mu_G\times \mu_G) (g,h)\right|\\
 		&=|d|\left| \int\limits_{\Delta}f_g(\tau_g)^2\,d(\mu_G\times \mu_G)(g, g)+\int\limits_{(G\times G)\setminus \Delta}f_g(\tau_h)f_h(\tau_g)\,d(\mu_G\times \mu_G)(g, h)\right|\\
 		&\leq  |d| \max\left \{\left| \int\limits_{\Delta}f_g(\tau_g) ^2\,d(\mu_G\times \mu_G)(g, g)\right|,\left| \int\limits_{(G\times G)\setminus \Delta}f_g(\tau_h)f_h(\tau_g)\,d(\mu_G\times \mu_G)(g, h)\right|\right\}\\
 		&\leq |d| \max\left \{\left| \int\limits_{\Delta}f_g(\tau_g) ^2\,d(\mu_G\times \mu_G)(g, g)\right|, \sup_{g, h \in G, g \neq h}|f_g(\tau_h)f_h(\tau_g)| \right\}.
 	\end{align*}
 	Whenever $f_g(\tau_g) =1$ for all $g\in G$, 
 	\begin{align*}
 		|\mu(G)|^2\leq |d|\max\left\{|(\mu_G\times \mu_G)(\Delta)|, \sup_{g, h \in G, g \neq h}|f_g(\tau_h)f_h(\tau_g)| \right\}.
 	\end{align*}
 \end{proof}
\begin{corollary}\label{COR1}
	\textbf{(First Order  Continuous Non-Archimedean  Welch Bound)}	Let $\mathcal{X}$ be a $d$-dimensional non-Archimedean Hilbert space over $\mathbb{K}$.	  Let  $\{\tau_g\}_{g\in G}$ be a   collection in $\mathcal{X}$  satisfying following conditions.
	\begin{enumerate}[\upshape (i)]
		\item For every $x\in \mathcal{X}$ and for every $\phi \in \mathcal{X}^*$, the map 
		\begin{align*}
			G \ni g \mapsto \langle x, \tau_g \rangle \phi(\tau_g) \in \mathcal{X}
		\end{align*}
		is measurable and integrable. 
		\item The map 
		\begin{align*}
			S_{\tau}:\mathcal{X} \ni x \mapsto \int\limits_{G} \langle x, \tau_g \rangle \tau_g\,d\mu_G(g) \in \mathcal{X}
		\end{align*}
		is a well-defined bounded linear operator.
		\item The operator $S_{\tau}$	is diagonalizable.
		\item 
		\begin{align*}
			\int\limits_{G\times G}|\langle \tau_g, \tau_h \rangle|^2\,d(\mu_G\times \mu_G) (g,h)<\infty.	
		\end{align*}
	\end{enumerate}
	Then 
	\begin{align*}
		\max\left \{\left| \int\limits_{\Delta}\langle \tau_g, \tau_g \rangle ^2 \,d(\mu_G\times \mu_G)(g, g)\right|, \sup_{g, h \in G, g \neq h}|\langle \tau_g, \tau_h \rangle |^2\right\}\geq \frac{1}{|d|}	\left|\int\limits_{G}\langle \tau_g, \tau_g \rangle \,d\mu_G(g) \right|^2.	
	\end{align*}
	In particular, if $\langle \tau_g, \tau_g \rangle  =1$ for all $g\in G$, then 
	\begin{align*}
		\text{\textbf{(First order  continnuous non-Archimedean  Welch bound)}} \\
		\max\left\{|(\mu_G\times \mu_G)(\Delta)|, \sup_{g, h \in G, g \neq h}|\langle \tau_g, \tau_h \rangle |^2 \right\}\geq \frac{|\mu(G)|^2}{|d|}.	
	\end{align*}
\end{corollary}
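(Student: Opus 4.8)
The plan is to deduce this corollary directly from Theorem \ref{WELCHNON1} by making the canonical choice of functionals furnished by the inner product. For each $g \in G$ I would set $f_g \coloneqq \langle \cdot, \tau_g\rangle$, so that $f_g(x) = \langle x, \tau_g\rangle$ for every $x \in \mathcal{X}$. Since $\mathcal{X}$ is a $d$-dimensional non-Archimedean Hilbert space, each such $f_g$ is a bounded linear functional, hence a genuine element of $\mathcal{X}^*$, and the family $\{f_g\}_{g \in G}$ is well-defined. The whole argument is then a matter of checking that the corollary's hypotheses are literally those of Theorem \ref{WELCHNON1} under this substitution, and that the conclusion translates back correctly.

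The first step is to match the hypotheses. Under the substitution one has $f_g(x)\phi(\tau_g) = \langle x, \tau_g\rangle\,\phi(\tau_g)$, so the measurability and integrability requirement (i) of the corollary coincides with that of the theorem. The operator $S_{f,\tau}$ becomes $S_\tau$ verbatim, so it inherits well-definedness, boundedness, and diagonalizability from conditions (ii) and (iii) of the corollary. Finally, since $f_g(\tau_h)f_h(\tau_g) = \langle \tau_h, \tau_g\rangle\langle \tau_g, \tau_h\rangle$, the integrability condition (iv) matches once one records the identity $|\langle \tau_h, \tau_g\rangle\langle \tau_g, \tau_h\rangle| = |\langle \tau_g, \tau_h\rangle|^2$, and likewise $f_g(\tau_g)^2 = \langle \tau_g, \tau_g\rangle^2$.

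With the hypotheses verified, I would simply invoke Theorem \ref{WELCHNON1}. Its conclusion asserts that $\max\{|\int_\Delta f_g(\tau_g)^2\,d(\mu_G\times\mu_G)(g,g)|,\ \sup_{g\neq h}|f_g(\tau_h)f_h(\tau_g)|\}$ is at least $\frac{1}{|d|}|\int_G f_g(\tau_g)\,d\mu_G(g)|^2$. Substituting $f_g(\tau_g) = \langle \tau_g, \tau_g\rangle$ and $|f_g(\tau_h)f_h(\tau_g)| = |\langle \tau_g, \tau_h\rangle|^2$ produces the displayed Welch bound immediately. The normalized case $\langle \tau_g, \tau_g\rangle = 1$ for all $g$ then collapses the first entry of the maximum to $|(\mu_G\times\mu_G)(\Delta)|$ and the right-hand side to $|\mu(G)|^2/|d|$, giving the final inequality.

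The only point requiring genuine attention is the identity $|\langle \tau_h, \tau_g\rangle| = |\langle \tau_g, \tau_h\rangle|$, which is precisely what allows the product $\langle \tau_h, \tau_g\rangle\langle \tau_g, \tau_h\rangle$ to collapse to $|\langle \tau_g, \tau_h\rangle|^2$ in absolute value. This follows from the (conjugate-)symmetry of the non-Archimedean inner product together with the fact that the valuation is invariant under the underlying field involution. Once this is in hand, the corollary is a mechanical specialization of Theorem \ref{WELCHNON1}, and I expect no substantive obstacle beyond this bookkeeping.
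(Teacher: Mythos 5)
Your proposal is correct and is exactly the derivation the paper intends: the corollary is stated without a separate proof precisely because it is the specialization $f_g = \langle \cdot, \tau_g\rangle$ of Theorem \ref{WELCHNON1}, and your verification of the hypotheses (continuity of $f_g$ in finite dimensions, the identity $|f_g(\tau_h)f_h(\tau_g)| = |\langle \tau_g,\tau_h\rangle|^2$ via symmetry of the form) is the right bookkeeping.
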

 Next we obtain higher order continuous non-Archimedean functional Welch bounds. We need the following vector space result.
 \begin{theorem}\cite{COMON, BOCCI}\label{SYMMETRICTENSORDIMENSION}
 	If $\mathcal{V}$ is a vector space of dimension $d$ and $\text{Sym}^m(\mathcal{V})$ denotes the vector space of symmetric m-tensors, then 
 	\begin{align*}
 		\text{dim}(\text{Sym}^m(\mathcal{V}))={d+m-1 \choose m}, \quad \forall m \in \mathbb{N}.
 	\end{align*}
 \end{theorem}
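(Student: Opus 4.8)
The plan is to exhibit an explicit basis of $\text{Sym}^m(\mathcal{V})$ indexed by non-decreasing multi-indices, and then to count those basis elements by a stars-and-bars argument. First I would fix a basis $\{e_1, \dots, e_d\}$ of $\mathcal{V}$. Writing $e_{i_1}\odot\cdots\odot e_{i_m}$ for the symmetric product of the factors $e_{i_1},\dots,e_{i_m}$, note that the tensors $e_{i_1}\otimes\cdots\otimes e_{i_m}$ with $(i_1,\dots,i_m)\in\{1,\dots,d\}^m$ span $\mathcal{V}^{\otimes m}$, so their symmetric images span $\text{Sym}^m(\mathcal{V})$. Since the symmetric product is invariant under permuting the factors, every such spanning element equals one whose index is non-decreasing, and hence the set $\{e_{i_1}\odot\cdots\odot e_{i_m}:1\le i_1\le\cdots\le i_m\le d\}$ already spans $\text{Sym}^m(\mathcal{V})$.

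Next I would argue that this spanning set is linearly independent, so that it is a basis. The cleanest route is the standard graded-algebra identification $\text{Sym}(\mathcal{V})\cong\mathbb{K}[x_1,\dots,x_d]$ sending $e_i\mapsto x_i$, under which $\text{Sym}^m(\mathcal{V})$ corresponds to the homogeneous component of degree $m$ and the symmetric product $e_{i_1}\odot\cdots\odot e_{i_m}$ corresponds to the monomial $x_{i_1}\cdots x_{i_m}$. Distinct non-decreasing multi-indices produce distinct monomials, and monomials are linearly independent in the polynomial ring; therefore the $e_{i_1}\odot\cdots\odot e_{i_m}$ with $1\le i_1\le\cdots\le i_m\le d$ form a basis. (If instead one reads $\text{Sym}^m(\mathcal{V})$ as the subspace of symmetric tensors in $\mathcal{V}^{\otimes m}$, the same count applies using the orbit-sum basis indexed by the same non-decreasing multi-indices.)

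It then remains to count the non-decreasing sequences $1\le i_1\le\cdots\le i_m\le d$, equivalently the degree-$m$ monomials in $x_1,\dots,x_d$. Recording each monomial by its exponent vector $(a_1,\dots,a_d)$ with $a_j\ge 0$ and $a_1+\cdots+a_d=m$ reduces the problem to counting the non-negative integer solutions of a single linear equation, which by the stars-and-bars bijection equals ${d+m-1\choose d-1}={d+m-1\choose m}$, giving $\dim(\text{Sym}^m(\mathcal{V}))={d+m-1\choose m}$. The only step requiring genuine care is the linear independence established in the previous paragraph; the remainder is formal combinatorics, and since the isomorphism $\text{Sym}(\mathcal{V})\cong\mathbb{K}[x_1,\dots,x_d]$ holds over an arbitrary field, there is no obstruction in the non-Archimedean setting.
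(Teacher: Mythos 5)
The paper does not prove this statement; it is quoted from \cite{COMON, BOCCI} as a known fact, so there is no internal argument to compare against. Your proof is the standard one and is correct: the symmetric products $e_{i_1}\odot\cdots\odot e_{i_m}$ with $1\le i_1\le\cdots\le i_m\le d$ form a basis (via the identification of $\text{Sym}^m(\mathcal{V})$ with the degree-$m$ homogeneous component of $\mathbb{K}[x_1,\dots,x_d]$, or via the orbit-sum basis if one takes the invariant-tensor reading), and stars-and-bars counts these index sets as ${d+m-1 \choose m}$. You also rightly flag the one point needing care: that both readings of $\text{Sym}^m$ yield the same count over an arbitrary field, which is relevant here since a non-Archimedean valued field need not have characteristic zero.
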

 
 \begin{theorem}\label{WELCHNON11F}
 (\textbf{Higher Order Continuous Non-Archimedean Functional Welch Bounds})
 Let $\mathcal{X}$ be a $d$-dimensional non-Archimedean Banach space over $\mathbb{K}$.	 Let $m \in \mathbb{N}$. Let  $\{\tau_g\}_{g\in G}$ be a   collection in $\mathcal{X}$ and $\{f_g\}_{g\in G}$ be a  collection in $\mathcal{X}^*$ satisfying following conditions.
 \begin{enumerate}[\upshape (i)]
 	\item For every $x\in \text{Sym}^m(\mathcal{X})$ and for every $\phi \in \text{Sym}^m(\mathcal{X})^*$, the map 
 	\begin{align*}
 		G \ni g \mapsto f^{\otimes m}_g(x)\phi(\tau_g^{\otimes m}) \in \text{Sym}^m(\mathcal{X})
 	\end{align*}
 	is measurable and integrable. 
 	\item The map 
 	\begin{align*}
 		S_{f,\tau}:\text{Sym}^m(\mathcal{X}) \ni x \mapsto \int\limits_{G} f^{\otimes m}_g(x)\tau_g^{\otimes m}\,d\mu_G(g) \in \text{Sym}^m(\mathcal{X})
 	\end{align*}
 	is a well-defined bounded linear operator.
 	\item The operator $S_{f, \tau}$	is diagonalizable. 
 		\item 
 	\begin{align*}
 		\int\limits_{G\times G}|f_g(\tau_h)f_h(\tau_g)|^m\,d(\mu_G\times \mu_G) (g,h)<\infty.
 	\end{align*}
 \end{enumerate}
 Then 	
 	\begin{align*}
 		\max\left \{\left| \int\limits_{\Delta}f_g(\tau_g)^{2m}\,d(\mu_G\times \mu_G)(g, g) \right|, \sup_{g, h \in G, g \neq h} |f_g(\tau_h)f_h(\tau_g)|^{m}\right\}\geq \frac{1}{\left|{d+m-1 \choose m}\right|}\left|\int\limits_{G}f_g(\tau_g)^m\,d\mu_G(g) \right|^2.	
 	\end{align*}
 	In particular, if $f_g(\tau_g) =1$ for all $g\in G$, then
 	\begin{align*}
 		\text{\textbf{(Higher order continuous  non-Archimedean functional Welch bounds)}} \\
 		\quad  \max\left\{|(\mu_G\times \mu_G)(\Delta)|, \sup_{g, h \in G, g \neq h}|f_g(\tau_h)f_h(\tau_g)|^{m} \right\}\geq \frac{|\mu(G)|^2}{\left|{d+m-1 \choose m}\right| }.	
 	\end{align*}
 \end{theorem}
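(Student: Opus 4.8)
The plan is to carry over the argument of Theorem \ref{WELCHNON1} essentially verbatim, but performed on the symmetric tensor power $\text{Sym}^m(\mathcal{X})$ in place of $\mathcal{X}$ itself. The role of the dimension $d$ in the first-order proof will then be played by $\dim\text{Sym}^m(\mathcal{X})=\binom{d+m-1}{m}$, which by Theorem \ref{SYMMETRICTENSORDIMENSION} is exactly the quantity appearing in the denominator of the asserted bound.

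First I would record the two trace identities for the operator $S_{f,\tau}$ acting on $\text{Sym}^m(\mathcal{X})$. The decisive algebraic input is the multiplicativity of tensor powers of functionals against tensor powers of vectors, namely $f_g^{\otimes m}(\tau_h^{\otimes m})=f_g(\tau_h)^m$ for all $g,h\in G$. Granting this, the same computation as in the first-order case yields
\begin{align*}
\operatorname{Tra}(S_{f,\tau}) &=\int\limits_{G} f_g^{\otimes m}(\tau_g^{\otimes m})\,d\mu_G(g)=\int\limits_{G} f_g(\tau_g)^m\,d\mu_G(g), \\
\operatorname{Tra}(S^2_{f,\tau}) &=\int\limits_{G}\int\limits_{G} f_g(\tau_h)^m f_h(\tau_g)^m\,d\mu_G(g)\,d\mu_G(h),
\end{align*}
i.e. the first-order formulas with every functional--vector pairing raised to the power $m$.

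Next, writing $\lambda_1,\dots,\lambda_D$ with $D=\binom{d+m-1}{m}$ for the diagonal entries of $S_{f,\tau}$ furnished by hypothesis (iii), I would apply the non-Archimedean Cauchy--Schwarz inequality (Theorem 2.4.2 of \cite{PEREZGARCIASCHIKHOF}) to the pairing of $(\lambda_1,\dots,\lambda_D)$ against the all-ones vector of length $D$, obtaining $\left|\sum_{k=1}^D\lambda_k\right|^2\leq |D|\left|\sum_{k=1}^D\lambda_k^2\right|$; here the factor $|D|=\left|\binom{d+m-1}{m}\right|$ is precisely $|\langle v,v\rangle|$ for the all-ones vector $v$, and this is the point at which Theorem \ref{SYMMETRICTENSORDIMENSION} enters. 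Substituting the trace identities converts this into
\begin{align*}
\left|\int\limits_{G} f_g(\tau_g)^m\,d\mu_G(g)\right|^2\leq \left|\binom{d+m-1}{m}\right|\left|\int\limits_{G\times G} f_g(\tau_h)^m f_h(\tau_g)^m\,d(\mu_G\times\mu_G)(g,h)\right|.
\end{align*}

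Finally I would split the double integral over the diagonal $\Delta$ and its complement and invoke the ultrametric (strong triangle) inequality to bound the modulus of the sum by the maximum of the two moduli, then bound the off-diagonal contribution by $\sup_{g\neq h}|f_g(\tau_h)f_h(\tau_g)|^m$, using hypothesis (iv) for integrability. Rearranging gives the stated inequality, and the case $f_g(\tau_g)=1$ specializes to the displayed functional Welch bounds. The main obstacle is not the inequality chain, which is formally identical to the first-order proof, but the justification of the trace formulas on $\text{Sym}^m(\mathcal{X})$: one must verify that $S_{f,\tau}$ genuinely carries $\binom{d+m-1}{m}$ diagonal entries (this is where hypothesis (iii) combined with Theorem \ref{SYMMETRICTENSORDIMENSION} is needed) and that the weak non-Archimedean Riemann integral commutes with the trace and with the tensor pairing, so that the identity $f_g^{\otimes m}(\tau_h^{\otimes m})=f_g(\tau_h)^m$ may legitimately be integrated termwise.
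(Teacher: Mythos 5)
Your proposal is correct and follows essentially the same route as the paper's own proof: diagonalize $S_{f,\tau}$ on $\text{Sym}^m(\mathcal{X})$, apply the non-Archimedean Cauchy--Schwarz inequality to the $\binom{d+m-1}{m}$ diagonal entries, identify the traces of $S_{f,\tau}$ and $S^2_{f,\tau}$ with the integrals via $f_g^{\otimes m}(\tau_h^{\otimes m})=f_g(\tau_h)^m$, and split the double integral over $\Delta$ and its complement using the ultrametric inequality. The only difference is that you explicitly flag the justification of the trace formulas and the interchange of integral with trace and tensor pairing, which the paper passes over silently.
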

 \begin{proof}
 	Let $\lambda_1, \dots, \lambda_{\text{dim}(\text{Sym}^m(\mathcal{X}))}$ be the diagonal entries in the diagonalization of  $S_{f, \tau}$. We note that 
 	 \begin{align*}
 		&b\operatorname{dim(\text{Sym}^m(\mathcal{X}))}=\operatorname{Tra}(bI_{\text{Sym}^m(\mathcal{X})})=\operatorname{Tra}(S_{f, \tau})=\int\limits_{G}f_g^{\otimes m}(\tau_g^{\otimes m})\,d\mu_G(g), \\
 		& b^2\operatorname{dim(\text{Sym}^m(\mathcal{X}))}=\operatorname{Tra}(b^2I_{\text{Sym}^m(\mathcal{X})})=\operatorname{Tra}(S^2_{f, \tau})=\int\limits_{G}\int\limits_{G}f_g^{\otimes m} (\tau^{\otimes m} _h) f^{\otimes m} _h(\tau^{\otimes m} _g)\,d\mu_G(g)\,d\mu_G(h).
 	\end{align*}	
 	 Then 
 		\begin{align*}
 		&\left|\int\limits_{G}f_g(\tau_g)^m\,d\mu_G(g) \right|^2=	\left|\int\limits_{G}f_g^{\otimes m}(\tau_g^{\otimes m})\,d\mu_G(g) \right|^2=|\operatorname{Tra}(S_{f, \tau})|^2=\left|\sum_{k=1}^{\text{dim}(\text{Sym}^m(\mathcal{X}))}
 		\lambda_k\right|^2\\
 		&\leq |\text{dim}(\text{Sym}^m(\mathcal{X}))| 	\left|\sum_{k=1}^{\text{dim}(\text{Sym}^m(\mathcal{X}))}\lambda_k^2 \right|= |\text{dim}(\text{Sym}^m(\mathcal{X}))| |\operatorname{Tra}(S^2_{f, \tau})|\\
 		&=\left|{d+m-1 \choose m}\right||\operatorname{Tra}(S^2_{f,\tau})|=\left|{d+m-1 \choose m}\right|\left|\int\limits_{G}\int\limits_{G}f_g^{\otimes m}(\tau_h^{\otimes m})  f_h^{\otimes m} (\tau_g^{\otimes m})\,d\mu_G(g)\,d\mu_G(h) \right|\\
 		&=\left|{d+m-1 \choose m}\right|\left|\int\limits_{G}\int\limits_{G}f_g(\tau_h) ^mf_h(\tau_g)^m\,d\mu_G(g)\,d\mu_G(h)\right|\\
 		&=\left|{d+m-1 \choose m}\right|\left|\int\limits_{G\times G}f_g(\tau_h) ^mf_h(\tau_g)^m\,d(\mu_G\times \mu_G)(g,h)\right|\\
 		&=\left|{d+m-1 \choose m}\right|\left| \int\limits_{\Delta}f_g(\tau_g)^{2m}\,d(\mu_G\times \mu_G)(g, g)+\int\limits_{(G\times G)\setminus \Delta}f_g(\tau_h) ^mf_h(\tau_g)^m\,d(\mu_G\times \mu_G)(g, h)\right|\\
 		&\leq  |d| \max\left \{\left| \int\limits_{\Delta}f_g(\tau_g) ^{2m}\,d(\mu_G\times \mu_G)(g, g)\right|,\left| \int\limits_{(G\times G)\setminus \Delta}f_g(\tau_h)^mf_h(\tau_g)^m\,d(\mu_G\times \mu_G)(g, h)\right|\right\}\\
 		&\leq \left|{d+m-1 \choose m}\right| \max\left \{\left| \int\limits_{\Delta}f_g(\tau_g)^{2m}\,d(\mu_G\times \mu_G)(g, g) \right|, \sup_{g, h \in G, g \neq h}|f_g(\tau_h) ^mf_h(\tau_g)^m| \right\}\\
 		&=\left|{d+m-1 \choose m}\right| \max\left \{\left| \int\limits_{\Delta}f_g(\tau_g)^{2m}\,d(\mu_G\times \mu_G)(g, g) \right|, \sup_{g, h \in G, g \neq h}|f_g(\tau_h)f_h(\tau_g)|^m \right\}.
 	\end{align*}
 	Whenever $f_g(\tau_g) =1$ for all $g\in G$,
 	\begin{align*}
 		|\mu(G)|^2\leq  \left|{d+m-1 \choose m}\right| \max\left\{|(\mu_G\times \mu_G)(\Delta)|, \sup_{g, h \in G, g \neq h}|f_g(\tau_h)f_h(\tau_g)|^{m} \right\}.
 	\end{align*}
 \end{proof}
\begin{corollary}
(\textbf{Higher Order Continuous Non-Archimedean  Welch Bounds})
Let $\mathcal{X}$ be a $d$-dimensional non-Archimedean Hilbert  space over $\mathbb{K}$.	 Let $m \in \mathbb{N}$. Let  $\{\tau_g\}_{g\in G}$ be a   collection in $\mathcal{X}$  satisfying following conditions.
\begin{enumerate}[\upshape (i)]
	\item For every $x\in \text{Sym}^m(\mathcal{X})$ and for every $\phi \in \text{Sym}^m(\mathcal{X})^*$, the map 
	\begin{align*}
		G \ni g \mapsto \langle x, \tau_g^{\otimes m}\rangle \phi(\tau_g^{\otimes m}) \in \text{Sym}^m(\mathcal{X})
	\end{align*}
	is measurable and integrable. 
	\item The map 
	\begin{align*}
		S_{\tau}:\text{Sym}^m(\mathcal{X}) \ni x \mapsto \int\limits_{G}  \langle x, \tau_g^{\otimes m}\rangle\tau_g^{\otimes m}\,d\mu_G(g) \in \text{Sym}^m(\mathcal{X})
	\end{align*}
	is a well-defined bounded linear operator.
	\item The operator $S_{\tau}$	is diagonalizable.
		\item 
	\begin{align*}
		\int\limits_{G\times G}|\langle \tau_g, \tau_h \rangle|^{2m}\,d(\mu_G\times \mu_G) (g,h)<\infty.	
	\end{align*}
\end{enumerate}
Then 	
\begin{align*}
	\max\left \{\left| \int\limits_{\Delta} \langle \tau_g, \tau_g\rangle^{2m}\,d(\mu_G\times \mu_G)(g, g) \right|, \sup_{g, h \in G, g \neq h} |\langle \tau_g, \tau_h\rangle|^{2m}\right\}\geq \frac{1}{\left|{d+m-1 \choose m}\right|}\left|\int\limits_{G}\langle \tau_g, \tau_g \rangle^m\,d\mu_G(g) \right|^2.	
\end{align*}
In particular, if $\langle \tau_g, \tau_g \rangle  =1$ for all $g\in G$, then
\begin{align*}
	\text{\textbf{(Higher order continuous  non-Archimedean  Welch bounds)}} \\
	\quad  \max\left\{|(\mu_G\times \mu_G)(\Delta)|, \sup_{g, h \in G, g \neq h}|\langle \tau_g, \tau_h \rangle|^{2m} \right\}\geq \frac{|\mu(G)|^2}{\left|{d+m-1 \choose m}\right| }.	
\end{align*}	
\end{corollary}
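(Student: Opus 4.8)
The plan is to deduce this corollary as a direct specialization of Theorem \ref{WELCHNON11F}, exactly as Corollary \ref{COR1} is obtained from Theorem \ref{WELCHNON1} in the first-order ($m=1$) case. First I would use the non-Archimedean Riesz representation to realize each vector $\tau_g$ as a functional: for each $g\in G$ define $f_g \in \mathcal{X}^*$ by $f_g(x)\coloneqq \langle x, \tau_g\rangle$ for all $x\in \mathcal{X}$. With this choice the four hypotheses of the present corollary become precisely the hypotheses (i)--(iv) of Theorem \ref{WELCHNON11F}. Indeed, the map $g\mapsto \langle x, \tau_g^{\otimes m}\rangle \phi(\tau_g^{\otimes m})$ in (i) is the map $g\mapsto f_g^{\otimes m}(x)\phi(\tau_g^{\otimes m})$ under the identification $f_g^{\otimes m}=\langle\,\cdot\,,\tau_g^{\otimes m}\rangle$ on $\text{Sym}^m(\mathcal{X})$; the operator $S_\tau$ in (ii) is literally $S_{f,\tau}$; condition (iii) is identical; and (iv) transcribes once the integrand is rewritten in terms of the inner product.

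The key computation is the translation of the relevant scalar products via the multiplicativity of the inner product on the symmetric tensor power. I would record that $\langle \tau_h^{\otimes m}, \tau_g^{\otimes m}\rangle = \langle \tau_h, \tau_g\rangle^m = f_g(\tau_h)^m$, and on the diagonal $f_g(\tau_g)^{2m} = \langle \tau_g, \tau_g\rangle^{2m}$ while $f_g(\tau_g)^m = \langle \tau_g, \tau_g\rangle^m$. Consequently the diagonal integral and the right-hand integral of Theorem \ref{WELCHNON11F} turn verbatim into the expressions $\int_{\Delta}\langle \tau_g, \tau_g\rangle^{2m}\,d(\mu_G\times\mu_G)(g,g)$ and $\int_G \langle \tau_g, \tau_g\rangle^m\,d\mu_G(g)$ written in the statement.

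The single point deserving attention is the off-diagonal supremum, which in Theorem \ref{WELCHNON11F} reads $\sup_{g\neq h}|f_g(\tau_h)f_h(\tau_g)|^m$. Substituting $f_g(\tau_h)=\langle \tau_h, \tau_g\rangle$ and $f_h(\tau_g)=\langle \tau_g, \tau_h\rangle$, and invoking the symmetry of the non-Archimedean inner product (which gives $|\langle \tau_h, \tau_g\rangle| = |\langle \tau_g, \tau_h\rangle|$) together with the multiplicativity of the absolute value, I would compute
\begin{align*}
|f_g(\tau_h)f_h(\tau_g)|^m = |\langle \tau_h, \tau_g\rangle\langle \tau_g, \tau_h\rangle|^m = |\langle \tau_g, \tau_h\rangle|^{2m}.
\end{align*}
Hence $\sup_{g\neq h}|f_g(\tau_h)f_h(\tau_g)|^m = \sup_{g\neq h}|\langle \tau_g, \tau_h\rangle|^{2m}$, which is exactly the off-diagonal term in the corollary. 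This is the only place where the Hilbert (rather than merely Banach) structure is used, and I expect it to be the main, though minor, obstacle; no further estimate is needed.

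Finally I would substitute these identities into the conclusion of Theorem \ref{WELCHNON11F} to obtain the displayed inequality, and then specialize to $\langle \tau_g, \tau_g\rangle = 1$ for all $g\in G$ to get the ``in particular'' statement: here the diagonal integrand is $1$, so its integral collapses to $(\mu_G\times\mu_G)(\Delta)$, the right-hand side becomes $|\mu(G)|^2/\left|{d+m-1\choose m}\right|$, and the claimed bound follows at once.
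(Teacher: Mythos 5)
Your proposal is correct and matches the paper's (implicit) derivation: the corollary is stated without proof precisely because it is the specialization $f_g = \langle \cdot, \tau_g\rangle$ of Theorem \ref{WELCHNON11F}, and your translation of the diagonal term, the off-diagonal supremum via $|f_g(\tau_h)f_h(\tau_g)|^m = |\langle \tau_g,\tau_h\rangle|^{2m}$, and the right-hand integral is exactly what is needed. No gaps.
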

Motivated from  Theorem \ref{WELCHNON1} and Corollary \ref{COR1} we formulate the following  questions.
 \begin{question}\label{Q1F}
 	\textbf{Let $\mathbb{K}$ be a non-Archimedean field  satisfying Equation (\ref{FU}) and $\mathcal{X}$ be a $d$-dimensional non-Archimedean Banach space over $\mathbb{K}$.
 		For which locally compact  group $G$ with measurable diagonal $\Delta$,   there exist  a collection $\{\tau_g\}_{g\in G}$  in $\mathcal{X}$ and a collection $\{f_g\}_{g\in G}$   in $\mathcal{X}^*$   satisfying the following. 
 			\begin{enumerate}[\upshape(i)]
 				\item For every $x\in \mathcal{X}$ and for every $\phi \in \mathcal{X}^*$, the map 
 				\begin{align*}
 					G \ni g \mapsto f_g(x)\phi(\tau_g) \in \mathcal{X}
 				\end{align*}
 				is measurable and integrable. 
 				\item The map 
 				\begin{align*}
 					S_{f,\tau}:\mathcal{X} \ni x \mapsto \int\limits_{G} f_g(x)\tau_g\,d\mu_G(g) \in \mathcal{X}
 				\end{align*}
 				is a well-defined bounded linear operator.
 				\item The operator $S_{f, \tau}$	is diagonalizable.
 					\item 
 				\begin{align*}
 					\int\limits_{G\times G}|f_g(\tau_h)f_h(\tau_g)|\,d(\mu_G\times \mu_G) (g,h)<\infty.
 				\end{align*}
 			\item $f_g(\tau_g) =1$ for all $g\in G$.
 			\item 
 			\begin{align*}
	\max\left\{|(\mu_G\times \mu_G)(\Delta)|, \sup_{g, h \in G, g \neq h}|f_g(\tau_h)f_h(\tau_g)| \right\}= \frac{|\mu(G)|^2}{|d|}.	
 			\end{align*}
 		\item $\|f_g\| =1$, $\|\tau_g\| =1$ for all $g\in G$.
 	\end{enumerate}}
 \end{question}
\begin{question}\label{CORQ1}
	\textbf{Let $\mathbb{K}$ be a non-Archimedean field  satisfying Equation (\ref{FU}) and $\mathcal{X}$ be a $d$-dimensional non-Archimedean Hilbert space over $\mathbb{K}$.
	For which locally compact  group $G$ with measurable diagonal $\Delta$,   there exists  a collection $\{\tau_g\}_{g\in G}$  in $\mathcal{X}$  satisfying the following. 
	\begin{enumerate}[\upshape (i)]
		\item For every $x\in \mathcal{X}$ and for every $\phi \in \mathcal{X}^*$, the map 
		\begin{align*}
			G \ni g \mapsto \langle x, \tau_g \rangle \phi(\tau_g) \in \mathcal{X}
		\end{align*}
		is measurable and integrable. 
		\item The map 
		\begin{align*}
			S_{\tau}:\mathcal{X} \ni x \mapsto \int\limits_{G} \langle x, \tau_g \rangle \tau_g\,d\mu_G(g) \in \mathcal{X}
		\end{align*}
		is a well-defined bounded linear operator.
		\item The operator $S_{\tau}$	is diagonalizable.
			\item 
		\begin{align*}
			\int\limits_{G\times G}|\langle \tau_g, \tau_h \rangle |^2\,d(\mu_G\times \mu_G) (g,h)<\infty.
		\end{align*}
		\item $\langle \tau_g, \tau_g\rangle =1$ for all $g\in G$.
		\item 
		\begin{align*}
			\max\left\{|(\mu_G\times \mu_G)(\Delta)|, \sup_{g, h \in G, g \neq h}|\langle \tau_g, \tau_h \rangle |^2 \right\}= \frac{|\mu(G)|^2}{|d|}.	
		\end{align*}
\end{enumerate}}	
\end{question}
 A particular case of Questions \ref{Q1F} and \ref{CORQ1}  are  following continuous non-Archimedean  versions of Zauner conjecture (see \cite{APPLEBY123, APPLEBY, ZAUNER, SCOTTGRASSL, FUCHSHOANGSTACEY, RENESBLUMEKOHOUTSCOTTCAVES, APPLEBYSYMM, BENGTSSON, APPLEBYFLAMMIAMCCONNELLYARD, KOPPCON, GOURKALEV, BENGTSSONZYCZKOWSKI, PAWELRUDNICKIZYCZKOWSKI, BENGTSSON123, MAGSINO, MAHESHKRISHNA} for Zauner conjecture in Hilbert spaces,   \cite{MAHESHKRISHNA2}  for Zauner conjecture in Hilbert C*-modules,     \cite{MAHESHKRISHNA3}  for Zauner conjecture in Banach spaces,  \cite{MAHESHKRISHNA4}	 for Zauner conjecture in non-Archimedean Hilbert spaces, \cite{MAHESHKRISHNA5} for Zauner conjecture in p-adic Hilbert spaces and \cite{MAHESHKRISHNA6} for Zauner conjecture for  non-Archimedean Banach spaces and p-adic Banach spaces).
 \begin{conjecture}\label{NZF} \textbf{(Continuous Non-Archimedean Functional Zauner Conjecture)
 		Let $\mathbb{K}$ non-Archimedean field  satisfying Equation (\ref{FU}). Given locally compact group $G$ and for  each $d\in \mathbb{N}$, 	there exist  a collection $\{\tau_g\}_{g\in G}$  in $\mathbb{K}^d$ (w.r.t. any non-Archimedean norm) and a collection $\{f_g\}_{g\in G}$   in $(\mathbb{K}^d)^*$   satisfying the following. 
 		\begin{enumerate}[\upshape(i)]
 			\item For every $x\in \mathbb{K}^d$ and for every $\phi \in (\mathbb{K}^d)^*$, the map 
 			\begin{align*}
 				G \ni g \mapsto f_g(x)\phi(\tau_g) \in \mathbb{K}^d
 			\end{align*}
 			is measurable and integrable. 
 			\item The map 
 			\begin{align*}
 				S_{f,\tau}:\mathbb{K}^d \ni x \mapsto \int\limits_{G} f_g(x)\tau_g\,d\mu_G(g) \in \mathbb{K}^d
 			\end{align*}
 			is a well-defined bounded linear operator.
 			\item The operator $S_{f, \tau}$	is diagonalizable. 
 				\item 
 			\begin{align*}
 				\int\limits_{G\times G}|f_g(\tau_h)f_h(\tau_g)|\,d(\mu_G\times \mu_G) (g,h)<\infty.
 			\end{align*}
 			\item $f_g(\tau_g) =1$ for all $g\in G$.
 			\item 
 			\begin{align*}
 				|(\mu_G\times \mu_G)(\Delta)|= |f_g(\tau_h)f_h(\tau_g)| = \frac{|\mu(G)|^2}{|d|}, \quad \forall g, h \in G, g \neq h.	
 			\end{align*}
 			\item $\|f_g\| =1$, $\|\tau_g\| =1$ for all $g\in G$.
 \end{enumerate}}
 \end{conjecture}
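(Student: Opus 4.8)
The plan is to treat Conjecture \ref{NZF} as the assertion that equality can be attained in the first order continuous non-Archimedean functional Welch bound of Theorem \ref{WELCHNON1}, subject to the extra normalizations $f_g(\tau_g)=1$ and $\|f_g\|=\|\tau_g\|=1$. Accordingly, the first step is to read off the equality conditions from the proof of Theorem \ref{WELCHNON1}. Equality forces the non-Archimedean Cauchy--Schwarz step $|\sum_k \lambda_k|^2 \le |d|\,|\sum_k \lambda_k^2|$ to be sharp and the ultrametric $\max$ estimate on the splitting of $\operatorname{Tra}(S_{f,\tau}^2)$ over $\Delta$ and $(G\times G)\setminus\Delta$ to be sharp. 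The former pins down the admissible spectra $\lambda_1,\dots,\lambda_d$ of $S_{f,\tau}$ (a tightness condition, morally that $S_{f,\tau}$ is a scalar multiple of $I_\mathcal{X}$), while the latter forces the cross quantities $|f_g(\tau_h)f_h(\tau_g)|$ to be constant for $g\ne h$ --- the non-Archimedean equiangularity condition.

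The second step is to produce candidates by a group-covariant (orbit) construction. Fix a linear representation $\pi\colon G\to GL(\mathbb{K}^d)$, a fiducial vector $\tau_0$ and a fiducial functional $f_0$, and set $\tau_g=\pi(g)\tau_0$ and $f_g=f_0\circ\pi(g)^{-1}$. Covariance makes $f_g(\tau_h)=f_0(\pi(g^{-1}h)\tau_0)$ depend only on $g^{-1}h$, so that the equiangularity demanded in condition (vi) collapses to a single orbit identity, and it makes $S_{f,\tau}$ intertwine $\pi$ with itself. If $\pi$ is chosen irreducible, a Schur-type argument should then deliver both the diagonalizability (iii) and the tightness (that $S_{f,\tau}$ is a scalar), yielding the equality automatically; the measurability and integrability hypotheses (i), (ii), (iv) reduce to regularity of the single map $g\mapsto f_0(\pi(g)\tau_0)$, and the normalizations (v), (vii) become the fiducial constraints $f_0(\tau_0)=1$, $\|f_0\|=\|\tau_0\|=1$.

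The third step is the bookkeeping that matches the three scalars appearing in (vi): $|(\mu_G\times\mu_G)(\Delta)|$, the constant value of $|f_g(\tau_h)f_h(\tau_g)|$, and $|\mu(G)|^2/|d|$. For finite $G$ with counting measure this is fully explicit --- writing $n:=|G|$ one has $\mu(G)=n$, the diagonal integral of $f_g(\tau_g)^2$ evaluates to $n$, and the problem becomes the construction of a non-Archimedean equiangular tight frame of $n$ vectors in $\mathbb{K}^d$ whose common angle has absolute value $|n|^2/|d|$, compatibly with $|(\mu_G\times\mu_G)(\Delta)|=|n|$; the latter two already force the arithmetic constraint $|d|=|n|$. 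I would first settle such small or structured cases, where explicit equiangular orbits exist, and then attempt to lift them to general locally compact $G$ by the covariant scheme above.

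The hard part is exactly the analogue of the classical, still open, Zauner problem: exhibiting the fiducial data $(\tau_0,f_0)$ --- equivalently, a non-Archimedean equiangular tight frame saturating the Welch bound --- for \emph{every} dimension $d$ and \emph{every} prescribed group $G$. No general construction of such configurations is known, and the ultrametric hypothesis (\ref{FU}) together with the rigid normalizations $\|f_g\|=\|\tau_g\|=1$ and the diagonalizability of $S_{f,\tau}$ only tighten the constraints rather than relax them. For this reason I do not expect a complete proof; the realistic target is existence for structured families of pairs $(G,d)$, with the universal statement remaining conjectural --- which is precisely why it is stated here as a conjecture rather than a theorem.
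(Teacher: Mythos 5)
There is nothing in the paper to compare against: Conjecture \ref{NZF} is stated as a conjecture, presented only as a particular case of Question \ref{Q1F} and motivated by the equality case of Theorem \ref{WELCHNON1}; the paper offers no proof, and none is expected. You correctly recognize this, and your analysis is consistent with the paper's framing: the conjecture asks for configurations saturating the first order continuous non-Archimedean functional Welch bound with the stated normalizations, i.e.\ a continuous non-Archimedean analogue of an equiangular tight frame. Your reading of the equality conditions (sharpness of the ultrametric Cauchy--Schwarz step, forcing the spectrum of $S_{f,\tau}$ toward a scalar multiple of the identity, and sharpness of the $\max$ estimate over $\Delta$ versus $(G\times G)\setminus\Delta$, forcing equiangularity), the group-covariant fiducial construction, and the arithmetic check $|d|=|n|$ for finite $G$ with counting measure are all sound and go beyond what the paper itself records. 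Since you explicitly do not claim a proof and correctly identify the universal statement as open, there is no gap to flag; just be aware that the paper gives you no partial results or special cases to build on, so even the ``structured families'' you propose as a realistic target would have to be constructed from scratch.
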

\begin{conjecture}\label{NZF2}
\textbf{(Continuous Non-Archimedean Zauner Conjecture)
	Let $\mathbb{K}$ non-Archimedean field  satisfying Equation (\ref{FU}). Given locally compact group $G$ and for  each $d\in \mathbb{N}$, 	there exists  a collection $\{\tau_g\}_{g\in G}$  in $\mathbb{K}^d$    satisfying the following. 
	\begin{enumerate}[\upshape(i)]
		\item For every $x\in \mathbb{K}^d$ and for every $\phi \in (\mathbb{K}^d)^*$, the map 
		\begin{align*}
			G \ni g \mapsto \langle x, \tau_g \rangle \phi(\tau_g) \in \mathbb{K}^d
		\end{align*}
		is measurable and integrable. 
		\item The map 
		\begin{align*}
			S_{\tau}:\mathbb{K}^d \ni x \mapsto \int\limits_{G} \langle x, \tau_g \rangle\tau_g\,d\mu_G(g) \in \mathbb{K}^d
		\end{align*}
		is a well-defined bounded linear operator.
		\item The operator $S_{\tau}$	is diagonalizable. 
			\item 
		\begin{align*}
			\int\limits_{G\times G}|\langle \tau_g, \tau_h \rangle |^2\,d(\mu_G\times \mu_G) (g,h)<\infty.
		\end{align*}
		\item $\langle \tau_g, \tau_g \rangle=1$ for all $g\in G$.
		\item 
		\begin{align*}
			|(\mu_G\times \mu_G)(\Delta)|= |\langle \tau_g, \tau_h \rangle|^2 = \frac{|\mu(G)|^2}{|d|}, \quad \forall g, h \in G, g \neq h.	
		\end{align*}
\end{enumerate}}	
\end{conjecture}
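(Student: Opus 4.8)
The plan is to read Conjecture \ref{NZF2} as the equality (saturation) case of the first order bound in Corollary \ref{COR1}: it asks for the non-Archimedean analogue of a tight equiangular system, i.e.\ a ``$p$-adic SIC''. So the problem reduces to exhibiting, for each $d$ and a suitably chosen $G$, a normalized family $\{\tau_g\}_{g\in G}\subset\mathbb{K}^d$ whose frame-type operator $S_\tau$ is a scalar and whose cross inner products all have the single constant value $|\langle\tau_g,\tau_h\rangle|^2=|\mu(G)|^2/|d|$.

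First I would record the structural simplification that the ultrametric geometry provides. If the system is \emph{tight}, meaning $S_\tau=cI_{\mathcal X}$ for some $c\in\mathbb{K}$, then $S_\tau$ is trivially diagonalizable, its eigenvalues are all equal to $c$, and the step $|\sum_{k=1}^d\lambda_k|^2\le|d|\,|\sum_{k=1}^d\lambda_k^2|$ used in the proof of Theorem \ref{WELCHNON1} becomes an equality automatically, since both sides equal $|d|^2|c|^2$. Thus in the non-Archimedean setting tightness alone saturates the trace estimate and disposes of hypothesis (iii), and the remaining work is to arrange (a) the diagonal measure identity $|(\mu_G\times\mu_G)(\Delta)|=|\mu(G)|^2/|d|$, a constraint coupling $(G,\mu_G)$ to $d$ that I would verify directly for the model group (for a finite $G$ with normalized counting measure it reduces to an elementary condition on $|\,|G|\,|$ and $|d|$), and (b) the equiangularity $|\langle\tau_g,\tau_h\rangle|^2=|\mu(G)|^2/|d|$ for all $g\ne h$.

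To produce tightness and equiangularity simultaneously I would use a covariant group-orbit construction, exactly as in the classical Zauner setting. Fix a projective unitary representation $\pi$ of $G$, or of a non-Archimedean Heisenberg--Weyl group modelled on $\mathbb{K}^d$, on $\mathbb{K}^d$, choose a fiducial vector $\tau_0$ with $\langle\tau_0,\tau_0\rangle=1$, and set $\tau_g=\pi(g)\tau_0$. If $\pi$ is irreducible, an averaging/Schur-type argument forces $S_\tau$ to be a scalar, which by the previous paragraph delivers tightness and the full trace-inequality equality for free; covariance also collapses the cross inner products $\langle\pi(g)\tau_0,\pi(h)\tau_0\rangle$ onto a single orbit of values, so the equiangularity requirement reduces to finitely many algebraic conditions on the single vector $\tau_0$.

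The hard part is the existence of such a fiducial vector $\tau_0$, and this is the genuine content of the statement: over $\mathbb{C}$ it is precisely Zauner's (SIC--POVM) conjecture, which remains open. In the present non-Archimedean/$p$-adic regime I would translate the equiangularity conditions into a system of polynomial equations in the coordinates of $\tau_0$ over $\mathbb{K}$ and try to solve them by first finding a solution over the residue field and then Hensel-lifting it to $\mathbb{K}$, using hypothesis (\ref{FU}) to control the relevant absolute values throughout. I expect this lifting step — producing a genuine solution in $\mathbb{K}^d$ for every $d$, rather than merely a solution modulo the maximal ideal — to be the principal obstacle, which is exactly why the statement is posed as a conjecture rather than proved as a theorem.
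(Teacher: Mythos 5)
There is a genuine gap, and it is structural rather than a fixable detail: the statement you are asked to address is posed in the paper as a \emph{conjecture}, a particular (extremal) case of Questions \ref{Q1F} and \ref{CORQ1} obtained by demanding equality in Corollary \ref{COR1}, and the paper supplies no proof of it. Your write-up correctly identifies this reading --- saturation of the first order continuous non-Archimedean Welch bound, i.e.\ a non-Archimedean analogue of a group-covariant SIC --- and your observation that tightness $S_\tau=cI$ forces equality in the trace step $\left|\sum_{k=1}^d\lambda_k\right|^2\le|d|\left|\sum_{k=1}^d\lambda_k^2\right|$ is sound, since both sides become $|d|^2|c|^2$. But nothing after that point is an argument. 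The existence of an irreducible projective representation of a ``non-Archimedean Heisenberg--Weyl group'' on $\mathbb{K}^d$ compatible with a $\mathbb{K}$-valued inner product, the existence of a fiducial vector $\tau_0$ solving the equiangularity equations, the nonsingularity needed for the Hensel lifting you invoke, and the compatibility of condition (FU) with the arithmetic of those equations are all asserted as things you ``would'' do; none is carried out, and you concede in your final sentence that the central step is the open problem. A proof proposal that ends by explaining why the statement is a conjecture is not a proof.

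Two further points you should confront if you pursue this. First, condition (vi) requires $|(\mu_G\times\mu_G)(\Delta)|=|\mu(G)|^2/|d|$, which is a constraint on the pair $(G,d)$ alone and is independent of any choice of $\{\tau_g\}$; for a non-discrete $G$ the diagonal typically has product measure zero while $\mu(G)$ need not vanish, so the conjecture as literally stated (``given locally compact group $G$ and for each $d$'') cannot be reduced to a fiducial-vector search without first restricting the admissible $G$. Your parenthetical remark about finite $G$ with counting measure gestures at this but does not resolve it. Second, hypothesis (\ref{FU}) is not merely a bookkeeping device for absolute values: it asserts $\left|\sum_j\lambda_j^2\right|=\max_j|\lambda_j|^2$, a strong condition on sums of squares in $\mathbb{K}$ that directly constrains which polynomial systems of the equiangularity type can have solutions with prescribed valuations, and any Hensel-lifting strategy would have to engage with it rather than cite it.
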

 There are four bounds which are companions of   Welch bounds in  Hilbert spaces. To recall them we need the notion of Gerzon's bound.
 \begin{definition}\cite{JASPERKINGMIXON}
 	Given $d\in \mathbb{N}$, define \textbf{Gerzon's bound}
 	\begin{align*}
 		\mathcal{Z}(d, \mathbb{K})\coloneqq 
 		\left\{ \begin{array}{cc} 
 			d^2 & \quad \text{if} \quad \mathbb{K} =\mathbb{C}\\
 			\frac{d(d+1)}{2} & \quad \text{if} \quad \mathbb{K} =\mathbb{R}.\\
 		\end{array} \right.
 	\end{align*}	
 \end{definition}
 \begin{theorem}\cite{JASPERKINGMIXON, XIACORRECTION, MUKKAVILLISABHAWALERKIPAAZHANG, SOLTANALIAN, BUKHCOX, CONWAYHARDINSLOANE, HAASHAMMENMIXON, RANKIN}  \label{LEVENSTEINBOUND}
 	Define $\mathbb{K}=\mathbb{R}$ or $\mathbb{C}$ and    $m\coloneqq \operatorname{dim}_{\mathbb{R}}(\mathbb{K})/2$.	If	$\{\tau_j\}_{j=1}^n$  is any collection of  unit vectors in $\mathbb{K}^d$, then
 	\begin{enumerate}[\upshape(i)]
 		\item (\textbf{Bukh-Cox bound})
 		\begin{align*}
 			\max _{1\leq j,k \leq n, j\neq k}|\langle \tau_j, \tau_k\rangle |\geq \frac{\mathcal{Z}(n-d, \mathbb{K})}{n(1+m(n-d-1)\sqrt{m^{-1}+n-d})-\mathcal{Z}(n-d, \mathbb{K})}\quad \text{if} \quad n>d.
 		\end{align*}
 		\item (\textbf{Orthoplex/Rankin bound})	
 		\begin{align*}
 			\max _{1\leq j,k \leq n, j\neq k}|\langle \tau_j, \tau_k\rangle |\geq\frac{1}{\sqrt{d}} \quad \text{if} \quad n>\mathcal{Z}(d, \mathbb{K}).
 		\end{align*}
 		\item (\textbf{Levenstein bound})	
 		\begin{align*}
 			\max _{1\leq j,k \leq n, j\neq k}|\langle \tau_j, \tau_k\rangle |\geq \sqrt{\frac{n(m+1)-d(md+1)}{(n-d)(md+1)}} \quad \text{if} \quad n>\mathcal{Z}(d, \mathbb{K}).
 		\end{align*}
 		\item (\textbf{Exponential bound})
 		\begin{align*}
 			\max _{1\leq j,k \leq n, j\neq k}|\langle \tau_j, \tau_k\rangle |\geq 1-2n^{\frac{-1}{d-1}}.
 		\end{align*}
 	\end{enumerate}	
 \end{theorem}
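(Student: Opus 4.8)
The plan is to treat the four inequalities separately, since they rest on genuinely different principles: a dimension-counting (obtuse-vectors) argument for the Rankin bound, a volumetric packing estimate for the exponential bound, a complementary-frame reduction for the Bukh--Cox bound, and the linear-programming method for the Levenshtein bound. Throughout I would work with the rank-one orthogonal projections $P_j := \tau_j\tau_j^*$, viewed as elements of the real inner product space $\mathcal{H}$ of Hermitian (resp.\ symmetric) operators on $\mathbb{K}^d$ under the Hilbert--Schmidt inner product, for which $\langle P_j, P_k\rangle_{HS} = |\langle \tau_j, \tau_k\rangle|^2$ and $\dim_{\mathbb{R}}\mathcal{H} = \mathcal{Z}(d,\mathbb{K})$.

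For the Orthoplex/Rankin bound (ii), I would set $Q_j := P_j - \tfrac1d I$, which are traceless with $\langle Q_j, Q_j\rangle_{HS} = 1-\tfrac1d$ and $\langle Q_j, Q_k\rangle_{HS} = |\langle\tau_j,\tau_k\rangle|^2 - \tfrac1d$. Arguing by contradiction, if $\max_{j\neq k}|\langle\tau_j,\tau_k\rangle|^2 < \tfrac1d$ then the $Q_j$ are nonzero, pairwise strictly obtuse vectors in the $(\mathcal{Z}(d,\mathbb{K})-1)$-dimensional space of traceless operators; since a system of pairwise obtuse vectors in an $N$-dimensional real inner product space has at most $N+1$ members, one obtains $n \le \mathcal{Z}(d,\mathbb{K})$, contradicting the hypothesis $n > \mathcal{Z}(d,\mathbb{K})$, whence $\max|\langle\tau_j,\tau_k\rangle|\ge 1/\sqrt d$. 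For the exponential bound (iv), I would run a packing argument on the unit sphere of $\mathbb{K}^d$: writing $\alpha := \max_{j\neq k}|\langle\tau_j,\tau_k\rangle|$, the open spherical caps of angular radius $\tfrac12\arccos\alpha$ about the antipodal points $\pm\tau_j$ are pairwise disjoint, so $n$ times the normalized measure of a single cap is at most $1$; lower-bounding that measure by an elementary estimate and solving for $\alpha$ produces $\alpha \ge 1 - 2n^{-1/(d-1)}$.

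For the Bukh--Cox bound (i) and the Levenshtein bound (iii) I would follow the cited literature, as both require machinery beyond the elementary estimates above. For Bukh--Cox, the key is a complementary (Naimark-type) construction: from a near-optimal system of $n$ unit vectors in $\mathbb{K}^d$ one extracts an associated antipodal configuration living in dimension $n-d$, applies Gerzon's bound $\mathcal{Z}(n-d,\mathbb{K})$ there, and tracks the interplay between the two configurations to obtain the stated rational expression. For Levenshtein, I would invoke the linear-programming method: expand $\sum_{j,k} F(\langle\tau_j,\tau_k\rangle)$ for a polynomial $F$ chosen as a nonnegative combination of Gegenbauer (Jacobi) kernels that are positive-definite on the relevant sphere, so that the quadratic-form nonnegativity, combined with $n > \mathcal{Z}(d,\mathbb{K})$ and the optimized choice of $F$, forces the claimed inequality.

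The hard part will be the Levenshtein bound (iii), which genuinely depends on the positive-definiteness of the Gegenbauer kernels and on the optimization of the auxiliary polynomial $F$; the Bukh--Cox bound (i) is the second obstacle, resting on the nontrivial complementary construction and its compatibility with Gerzon's bound. Parts (ii) and (iv) are elementary by comparison and can be completed directly along the lines sketched above.
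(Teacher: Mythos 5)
The paper itself offers no proof of this statement: Theorem \ref{LEVENSTEINBOUND} is imported verbatim from the cited references and serves only as motivation for the questions that follow it, so there is no in-paper argument to measure your proposal against. Judged on its own terms, your roadmap is the standard one from that literature. The obtuse-vector argument for the orthoplex/Rankin bound is correct and essentially complete: the traceless operators $Q_j = P_j - \tfrac{1}{d}I$ live in a real inner product space of dimension $\mathcal{Z}(d,\mathbb{K})-1$, your computation of $\langle Q_j,Q_k\rangle_{HS}$ is right, and the classical fact that at most $N+1$ vectors in $\mathbb{R}^N$ can be pairwise strictly obtuse yields $n\le \mathcal{Z}(d,\mathbb{K})$, the desired contradiction. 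The cap-packing sketch for the exponential bound is the right idea but is the one place where you would still owe a genuine computation: one must pack $2n$ caps (around $\pm\tau_j$), supply an explicit lower bound for the normalized cap measure, and verify that the resulting exponent is $-1/(d-1)$ as stated rather than an exponent involving the real dimension of the unit sphere of $\mathbb{K}^d$ when $\mathbb{K}=\mathbb{C}$; as written this step is a plan, not a proof. For the Bukh--Cox and Levenshtein bounds your proposal explicitly defers to the cited papers, which is exactly what the paper does, so for those two parts your ``proof'' and the paper's treatment coincide: both are citations rather than arguments.
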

 Motivated from Theorem \ref{LEVENSTEINBOUND}, Theorem \ref{WELCHNON1} and Corollary \ref{COR1} we ask  the following problems.
 \begin{question}
 	\textbf{Whether there is a continuous   non-Archimedean functional version of Theorem \ref{LEVENSTEINBOUND}? In particular, does there exists a   version of}
 	\begin{enumerate}[\upshape(i)]
 		\item \textbf{continuous non-Archimedean functional Bukh-Cox bound?}
 		\item \textbf{continuous non-Archimedean functional Orthoplex/Rankin bound?}
 		\item \textbf{continuous non-Archimedean functional Levenstein bound?}
 		\item \textbf{continuous non-Archimedean functional Exponential bound?}
 	\end{enumerate}		
 \end{question}
\begin{question}
	\textbf{Whether there is a continuous   non-Archimedean  version of Theorem \ref{LEVENSTEINBOUND}? In particular, does there exists a   version of}
	\begin{enumerate}[\upshape(i)]
		\item \textbf{continuous non-Archimedean  Bukh-Cox bound?}
		\item \textbf{continuous non-Archimedean  Orthoplex/Rankin bound?}
		\item \textbf{continuous non-Archimedean  Levenstein bound?}
		\item \textbf{continuous non-Archimedean  Exponential bound?}
	\end{enumerate}		
\end{question}

\section{Continuous p-adic  Welch bounds}
In this section we derive p-adic Banach space version of results done in \cite{MAHESHKRISHNA5}. Let p be a prime and $\mathbb{Q}_p$ be the field of p-adic numbers. In this section, $\mathcal{X}$ is a $d$-dimensional p-adic Banach space over $\mathbb{Q}_p$. 
\begin{theorem}\label{WELCHNONQ}
\textbf{(First Order  Continuous p-adic Functional Welch Bound)}	Let $p$ be a prime and $\mathcal{X}$ be a $d$-dimensional p-adic Banach space over $\mathbb{Q}_p$.  Let  $\{\tau_g\}_{g\in G}$ be a   collection in $\mathcal{X}$ and $\{f_g\}_{g\in G}$ be a  collection in $\mathcal{X}^*$ satisfying following conditions.
\begin{enumerate}[\upshape (i)]
	\item For every $x\in \mathcal{X}$ and for every $\phi \in \mathcal{X}^*$, the map 
	\begin{align*}
		G \ni g \mapsto f_g(x)\phi(\tau_g) \in \mathcal{X}
	\end{align*}
	is measurable and integrable. 
	\item There exists $b \in \mathbb{Q}_p$ such that 
	\begin{align*}
		 \int\limits_{G} f_g(x)\tau_g\,d\mu_G(g)=bx, \quad \forall x \in \mathcal{X}.
	\end{align*}
	\item 
\begin{align*}
	\int\limits_{G\times G}|f_g(\tau_h)f_h(\tau_g)|\,d(\mu_G\times \mu_G) (g,h)<\infty.
\end{align*}
	\end{enumerate}
Then 
\begin{align*}
 \max\left \{\left|  \int\limits_{\Delta}f_g(\tau_g)^2\,d(\mu_G\times \mu_G)(g, g) \right|, \sup_{g, h \in G, g \neq h}|f_g(\tau_h)f_h(\tau_g)| \right\}\geq \frac{1}{|d|}\left| \int\limits_{G}f_g(\tau_g) \,d\mu_G(g) \right|^2.	
\end{align*}
In particular, if $f_g(\tau_g) =1$ for all $g\in G$, then 
\begin{align*}
 \text{\textbf{(First order  continuous p-adic  functional Welch bound)}} \\
  \max\left\{|(\mu_G\times \mu_G)(\Delta)|, \sup_{g, h \in G, g \neq h}|f_g(\tau_h)f_h(\tau_g)|  \right\}\geq \frac{|\mu(G)|^2}{|d|}.	
\end{align*}
\end{theorem}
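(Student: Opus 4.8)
The plan is to follow the template of the proof of Theorem \ref{WELCHNON1}, but to exploit the stronger hypothesis (ii), namely that $S_{f,\tau}=bI_{\mathcal{X}}$ for a scalar $b\in\mathbb{Q}_p$; this is essential because $\mathbb{Q}_p$ need not satisfy Equation (\ref{FU}), so the non-Archimedean Cauchy-Schwarz step used there is unavailable. First I would record the two trace identities
\begin{align*}
\operatorname{Tra}(S_{f,\tau})&=\int\limits_{G}f_g(\tau_g)\,d\mu_G(g),\\
\operatorname{Tra}(S^2_{f,\tau})&=\int\limits_{G\times G}f_g(\tau_h)f_h(\tau_g)\,d(\mu_G\times\mu_G)(g,h),
\end{align*}
the second of which follows by composing the integral operator with itself and invoking the Fubini-type theorem for the weak non-Archimedean Riemann integral, together with the measurability in (i) and the integrability in (iii).

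Since $S_{f,\tau}=bI_{\mathcal{X}}$ acts on a $d$-dimensional space, we have $\operatorname{Tra}(S_{f,\tau})=bd$ and $\operatorname{Tra}(S^2_{f,\tau})=b^2d$. The key observation, which replaces the Cauchy-Schwarz inequality of the non-Archimedean setting by an exact equality, is
\begin{align*}
\frac{1}{|d|}\left|\int\limits_{G}f_g(\tau_g)\,d\mu_G(g)\right|^2=\frac{|bd|^2}{|d|}=|b|^2|d|=|b^2d|=|\operatorname{Tra}(S^2_{f,\tau})|.
\end{align*}

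Next I would split the double integral $\operatorname{Tra}(S^2_{f,\tau})$ over the measurable diagonal $\Delta$ and its complement $(G\times G)\setminus\Delta$ and apply the ultrametric strong triangle inequality valid in $\mathbb{Q}_p$, so that $|\operatorname{Tra}(S^2_{f,\tau})|$ is bounded above by the maximum of $\left|\int_{\Delta}f_g(\tau_g)^2\,d(\mu_G\times\mu_G)(g,g)\right|$ and $\left|\int_{(G\times G)\setminus\Delta}f_g(\tau_h)f_h(\tau_g)\,d(\mu_G\times\mu_G)(g,h)\right|$. Finally, bounding the latter off-diagonal integral by $\sup_{g,h\in G,\,g\neq h}|f_g(\tau_h)f_h(\tau_g)|$ through the standard non-Archimedean integral estimate gives the claimed inequality; specializing to $f_g(\tau_g)=1$ turns $\int_G f_g(\tau_g)\,d\mu_G(g)$ into $\mu(G)$ and $\int_\Delta f_g(\tau_g)^2\,d(\mu_G\times\mu_G)(g,g)$ into $(\mu_G\times\mu_G)(\Delta)$, yielding the stated p-adic Welch bound.

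The main obstacle is not the inequality chain itself, which in fact collapses to an equality at the step where Cauchy-Schwarz was previously used, because $S_{f,\tau}$ is now scalar; rather, it is the rigorous justification of the trace-integral identities, and in particular of $\operatorname{Tra}(S^2_{f,\tau})=\int_{G\times G}f_g(\tau_h)f_h(\tau_g)\,d(\mu_G\times\mu_G)$, which demands interchanging the order of integration for the weak non-Archimedean Riemann integral and using the measurability of $\Delta$ in $G\times G$.
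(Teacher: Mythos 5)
Your proposal is correct and follows essentially the same route as the paper's proof: define $S_{f,\tau}$, use the scalar hypothesis $S_{f,\tau}=bI_{\mathcal{X}}$ to get the exact identity $|bd|^2=|d|\,|b^2d|$ in place of a Cauchy--Schwarz estimate, then split the double integral over $\Delta$ and its complement and apply the ultrametric inequality. The only difference is cosmetic: you flag the Fubini-type interchange and the trace-integral identities as the delicate points, which the paper simply asserts.
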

\begin{proof}
	Define $S_{f, \tau} : \mathcal{X}\ni x \mapsto \int\limits_{G}f_g(x) \tau_g\,d\mu_G(g) \in \mathcal{X}$. Then 
\begin{align*}
	&bd=\operatorname{Tra}(bI_{\mathcal{X}})=\operatorname{Tra}(S_{f, \tau})= \int\limits_{G}f_g(\tau_g)\,d\mu_G(g) , \\
	& b^2d=\operatorname{Tra}(b^2I_{\mathcal{X}})=\operatorname{Tra}(S^2_{f,\tau})= \int\limits_{G} \int\limits_{G}f_g(\tau_h)f_h(\tau_g)\,d\mu_G(g)\,d\mu_G(h).
\end{align*}	
Therefore 
\begin{align*}
	\left| \int\limits_{G}f_g(\tau_g) \,d\mu_G(g) \right|^2&=|\operatorname{Tra}(S_{f, \tau})|^2=|bd|^2=|d||b^2d|	
	=|d|\left| \int\limits_{G} \int\limits_{G}f_g(\tau_h)f_h(\tau_g)\,d\mu_G(g) \,d\mu_G(h) \right|\\
	&=|d|\left| \int\limits_{G\times G}f_g(\tau_h)f_h(\tau_g)\,d(\mu_G\times \mu_G)(g, h)\right|\\
	&=|d|\left|  \int\limits_{\Delta}f_g(\tau_g)^2\,d(\mu_G\times \mu_G)(g, g)+\int\limits_{(G\times G)\setminus \Delta}f_g(\tau_h)f_h(\tau_g)\,d(\mu_G\times \mu_G)(g, h)\right|\\
&\leq |d| \max\left \{\left|  \int\limits_{\Delta}f_g(\tau_g)^2\,d(\mu_G\times \mu_G)(g, g) \right|, \left|\int\limits_{(G\times G)\setminus \Delta}f_g(\tau_h)f_h(\tau_g)\,d(\mu_G\times \mu_G)(g, h)\right| \right\}\\
&\leq |d| \max\left \{\left|  \int\limits_{\Delta}f_g(\tau_g)^2\,d(\mu_G\times \mu_G)(g, g)\right|, \sup_{g, h \in G, g \neq h}|f_g(\tau_h)f_h(\tau_g)| \right\}.
\end{align*}
Whenever $f_g(\tau_g) =1$ for all $g\in G$,
\begin{align*}
		|\mu(G)|^2\leq |d|\max\left\{|(\mu_G\times \mu_G)(\Delta)|, \sup_{g, h \in G, g \neq h}|f_g(\tau_h)f_h(\tau_g)|  \right\}.
\end{align*}		
\end{proof}
\begin{corollary}\label{CORP1}
\textbf{(First Order  Continuous p-adic  Welch Bound)}	
Let $p$ be a prime and $\mathcal{X}$ be a $d$-dimensional p-adic Hilbert space over $\mathbb{Q}_p$.  Let  $\{\tau_g\}_{g\in G}$ be a   collection in $\mathcal{X}$  satisfying following conditions.
\begin{enumerate}[\upshape (i)]
	\item For every $x\in \mathcal{X}$ and for every $\phi \in \mathcal{X}^*$, the map 
	\begin{align*}
		G \ni g \mapsto \langle x, \tau_g \rangle\phi(\tau_g) \in \mathcal{X}
	\end{align*}
	is measurable and integrable. 
	\item There exists $b \in \mathbb{Q}_p$ such that 
	\begin{align*}
		\int\limits_{G} \langle x, \tau_g \rangle\tau_g\,d\mu_G(g)=bx, \quad \forall x \in \mathcal{X}.
	\end{align*}
	\item 
\begin{align*}
	\int\limits_{G\times G}|\langle \tau_g, \tau_h \rangle |^2\,d(\mu_G\times \mu_G) (g,h)<\infty.
\end{align*}
\end{enumerate}
Then 
\begin{align*}
	\max\left \{\left|  \int\limits_{\Delta}\langle \tau_g, \tau_g \rangle^2\,d(\mu_G\times \mu_G)(g, g) \right|, \sup_{g, h \in G, g \neq h}|\langle \tau_g, \tau_h \rangle|^2 \right\}\geq \frac{1}{|d|}\left| \int\limits_{G}\langle \tau_g, \tau_g \rangle \,d\mu_G(g) \right|^2.	
\end{align*}
In particular, if $\langle \tau_g, \tau_g\rangle =1$ for all $g\in G$, then 
\begin{align*}
	\text{\textbf{(First order  continuous p-adic   Welch bound)}} \\
	\max\left\{|(\mu_G\times \mu_G)(\Delta)|, \sup_{g, h \in G, g \neq h}|\langle \tau_g, \tau_h \rangle|^2  \right\}\geq \frac{|\mu(G)|^2}{|d|}.	
\end{align*}
\end{corollary}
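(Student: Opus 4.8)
The plan is to obtain Corollary \ref{CORP1} as a direct specialization of Theorem \ref{WELCHNONQ}, the functional version already established. The bridge between the two statements is the observation that in a p-adic Hilbert space every vector determines a bounded linear functional through the inner product. Accordingly, for each $g \in G$ I would set $f_g \coloneqq \langle \cdot, \tau_g\rangle$ and verify that $f_g \in \mathcal{X}^*$: linearity is immediate from bilinearity of $\langle \cdot, \cdot\rangle$, while boundedness follows from the p-adic Cauchy--Schwarz inequality, which gives $|f_g(x)| = |\langle x, \tau_g\rangle| \le \|x\|\,\|\tau_g\|$.

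With this choice the data $\{\tau_g\}_{g\in G}$ and $\{f_g\}_{g\in G}$ are designed to fit the hypotheses of Theorem \ref{WELCHNONQ}, and I would check each condition in turn. Condition (i) of the theorem reads $g \mapsto f_g(x)\phi(\tau_g) = \langle x, \tau_g\rangle \phi(\tau_g)$, which is precisely hypothesis (i) of the corollary; condition (ii) asserts the existence of $b \in \mathbb{Q}_p$ with $\int_G f_g(x)\tau_g\,d\mu_G(g) = \int_G \langle x, \tau_g\rangle \tau_g\,d\mu_G(g) = bx$, i.e. hypothesis (ii) of the corollary; and condition (iii) is the integrability of $|f_g(\tau_h)f_h(\tau_g)|$, which I must match to the integrability of $|\langle \tau_g, \tau_h\rangle|^2$.

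The one computation that does real work is the identification of the mixed terms. Since the p-adic inner product is a symmetric bilinear form, $\langle \tau_h, \tau_g\rangle = \langle \tau_g, \tau_h\rangle$, so that $f_g(\tau_h)f_h(\tau_g) = \langle \tau_h, \tau_g\rangle\langle \tau_g, \tau_h\rangle = \langle \tau_g, \tau_h\rangle^2$ and hence $|f_g(\tau_h)f_h(\tau_g)| = |\langle \tau_g, \tau_h\rangle|^2$; likewise $f_g(\tau_g) = \langle \tau_g, \tau_g\rangle$ and $f_g(\tau_g)^2 = \langle \tau_g, \tau_g\rangle^2$. Substituting these equalities into both hypothesis (iii) and the conclusion of Theorem \ref{WELCHNONQ} turns that inequality verbatim into the asserted bound. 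Finally, specializing to $\langle \tau_g, \tau_g\rangle = 1$ (equivalently $f_g(\tau_g) = 1$) and using $\int_\Delta 1\,d(\mu_G\times\mu_G) = (\mu_G\times\mu_G)(\Delta)$ yields the displayed first order continuous p-adic Welch bound.

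I do not anticipate a genuine obstacle, since the argument is a translation rather than a new estimate; the only points requiring care are the symmetry of the p-adic inner product, which is what makes the mixed product $f_g(\tau_h)f_h(\tau_g)$ collapse to a single square $\langle \tau_g, \tau_h\rangle^2$, and the verification that $\langle \cdot, \tau_g\rangle$ is a legitimate element of $\mathcal{X}^*$, both of which are standard features of p-adic Hilbert space theory.
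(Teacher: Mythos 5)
Your proposal is correct and is exactly the route the paper intends: the corollary is stated without proof as the immediate specialization of Theorem \ref{WELCHNONQ} obtained by taking $f_g = \langle \cdot, \tau_g\rangle$, with the symmetry of the p-adic inner product giving $|f_g(\tau_h)f_h(\tau_g)| = |\langle \tau_g, \tau_h\rangle|^2$. Nothing further is needed.
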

Now we derive higher order version of Theorem \ref{WELCHNONQ}.
  \begin{theorem}\label{WELCHNON2}
(\textbf{Higher Order Continuous p-adic  Functional Welch Bounds})
 Let $p$ be a prime and $\mathcal{X}$ be a $d$-dimensional p-adic Banach space over $\mathbb{Q}_p$. Let $m \in \mathbb{N}$. Let  $\{\tau_g\}_{g\in G}$ be a   collection in $\mathcal{X}$ and $\{f_g\}_{g\in G}$ be a  collection in $\mathcal{X}^*$ satisfying following conditions.
\begin{enumerate}[\upshape (i)]
	\item For every $x\in \text{Sym}^m(\mathcal{X})$ and for every $\phi \in \text{Sym}^m(\mathcal{X})^*$, the map 
	\begin{align*}
		G \ni g \mapsto f^{\otimes m}_g(x)\phi(\tau_g^{\otimes m}) \in \text{Sym}^m(\mathcal{X})
	\end{align*}
	is measurable and integrable. 
	\item There exists $b \in \mathbb{Q}_p$ satisfying 
	\begin{align*}
		\int\limits_{G}f_g^{\otimes m}(x) \tau_g^{\otimes m}\,d\mu_G(g) =bx, \quad \forall x \in \text{Sym}^m(\mathcal{X}).
	\end{align*} 
	\item 
\begin{align*}
	\int\limits_{G\times G}|f_g(\tau_h)f_h(\tau_g)|^m\,d(\mu_G\times \mu_G) (g,h)<\infty.
\end{align*}
\end{enumerate}
Then
\begin{align*}
\max\left \{\left| \int\limits_{\Delta}f_g(\tau_g) ^{2m} \,d(\mu_G\times \mu_G)(g, g)\right|, \sup_{g, h \in G, g \neq h}|f_g(\tau_h)f_h(\tau_g)|^{m}\right\}\geq \frac{1}{\left|{d+m-1 \choose m}\right|}\left|\int\limits_{G}f_g(\tau_g)^m\,d\mu_G(g)  \right|^2.	
\end{align*}
In particular, if $f_g(\tau_g) =1$ for all $g\in G$, then
\begin{align*}
 \text{\textbf{(Higher order p-adic  continuous functional Welch bound)}} \\
   \max \left \{|(\mu_G\times \mu_G)(\Delta)|, \sup_{g, h \in G, g \neq h}|f_g(\tau_h)f_h(\tau_g)|^{m} \right\}\geq \frac{|\mu(G)|^2}{\left|{d+m-1 \choose m}\right| }.	
\end{align*}
 \end{theorem}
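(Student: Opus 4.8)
The plan is to mirror the proof of Theorem \ref{WELCHNONQ}, lifted to the $m$-th symmetric tensor power, with the crucial simplification that hypothesis (ii) forces $S_{f,\tau}$ to be a scalar operator, so that no analogue of a Cauchy--Schwarz estimate is needed (in particular condition (\ref{FU}) plays no role in this section). First I would set $D \coloneqq \dim(\text{Sym}^m(\mathcal{X})) = \binom{d+m-1}{m}$ by Theorem \ref{SYMMETRICTENSORDIMENSION}, and record the two trace identities
\begin{align*}
\operatorname{Tra}(S_{f,\tau}) = \int\limits_G f_g^{\otimes m}(\tau_g^{\otimes m})\,d\mu_G(g) = \int\limits_G f_g(\tau_g)^m\,d\mu_G(g),
\end{align*}
together with
\begin{align*}
\operatorname{Tra}(S^2_{f,\tau}) = \int\limits_G\int\limits_G f_g^{\otimes m}(\tau_h^{\otimes m})f_h^{\otimes m}(\tau_g^{\otimes m})\,d\mu_G(g)\,d\mu_G(h) = \int\limits_G\int\limits_G f_g(\tau_h)^m f_h(\tau_g)^m\,d\mu_G(g)\,d\mu_G(h),
\end{align*}
where I use the multiplicativity $f_g^{\otimes m}(\tau_h^{\otimes m}) = f_g(\tau_h)^m$ of the induced functional on symmetric tensors.

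Next I would invoke hypothesis (ii): since $\int_G f_g^{\otimes m}(x)\tau_g^{\otimes m}\,d\mu_G(g) = bx$, the operator equals $bI_{\text{Sym}^m(\mathcal{X})}$, so taking traces gives $\operatorname{Tra}(S_{f,\tau}) = bD$ and $\operatorname{Tra}(S^2_{f,\tau}) = b^2 D$. Because the $p$-adic absolute value is multiplicative, this yields the exact equality $|\operatorname{Tra}(S_{f,\tau})|^2 = |bD|^2 = |D|\,|b^2 D| = |D|\,|\operatorname{Tra}(S^2_{f,\tau})|$; this is precisely where the scalar hypothesis replaces the inequality $|\sum_k \lambda_k|^2 \leq |D|\,|\sum_k \lambda_k^2|$ used in the diagonalizable setting of Theorem \ref{WELCHNON11F}. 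Substituting the second trace identity then gives
\begin{align*}
\left|\int\limits_G f_g(\tau_g)^m\,d\mu_G(g)\right|^2 = \left|\binom{d+m-1}{m}\right|\left|\int\limits_{G\times G} f_g(\tau_h)^m f_h(\tau_g)^m\,d(\mu_G\times\mu_G)(g,h)\right|.
\end{align*}

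I would then split the double integral along the measurable diagonal $\Delta$ into its restriction to $\Delta$ and to $(G\times G)\setminus\Delta$, and apply the ultrametric (strong triangle) inequality to bound the absolute value of the sum by the maximum of the two absolute values. For the off-diagonal piece I would use the non-Archimedean estimate that the absolute value of an integral is at most the supremum of the absolute value of its integrand, finite by hypothesis (iii), together with $|f_g(\tau_h)^m f_h(\tau_g)^m| = |f_g(\tau_h)f_h(\tau_g)|^m$. Reading off the resulting chain of inequalities produces the stated bound, and substituting $f_g(\tau_g) = 1$ for all $g$ (so that the diagonal integral becomes $(\mu_G\times\mu_G)(\Delta)$ and $\int_G f_g(\tau_g)^m\,d\mu_G(g) = \mu(G)$) gives the displayed Welch bound.

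The computation is essentially routine once the trace identities are in place; the only points requiring care are the tensor-power factorization $f_g^{\otimes m}(\tau_h^{\otimes m}) = f_g(\tau_h)^m$ on $\text{Sym}^m(\mathcal{X})$ and the justification that the integral over $G\times G$ may be decomposed over $\Delta$ and its complement and that the off-diagonal integral obeys the supremum bound, both of which rest on the measurability of $\Delta$ and the integrability hypothesis (iii). I expect no genuine obstacle here, since the scalar hypothesis (ii) bypasses the inequality that makes the diagonalizable case delicate.
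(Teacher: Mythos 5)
Your proposal is correct and follows essentially the same route as the paper's proof: the same two trace identities for $S_{f,\tau}=bI_{\text{Sym}^m(\mathcal{X})}$, the exact equality $|bD|^2=|D|\,|b^2D|$ in place of the Cauchy--Schwarz step of the diagonalizable case, and the same splitting of the double integral over $\Delta$ and its complement followed by the ultrametric maximum bound. Your remark that the scalar hypothesis (ii) makes condition (\ref{FU}) unnecessary here accurately reflects how the paper's p-adic argument differs from its non-Archimedean one.
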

  \begin{proof}
 Define $S_{f, \tau} : \text{Sym}^m(\mathcal{X})\ni x \mapsto \int\limits_{G}f_g^{\otimes m}(x)\tau_g^{\otimes m}\,d\mu_G(g) \in \text{Sym}^m(\mathcal{X})$. Then 
    \begin{align*}
    	&b\operatorname{dim(\text{Sym}^m(\mathcal{X}))}=\operatorname{Tra}(bI_{\text{Sym}^m(\mathcal{X})})=\operatorname{Tra}(S_{f, \tau})=\int\limits_{G}f_g^{\otimes m}(\tau_g^{\otimes m})\,d\mu_G(g), \\
    	& b^2\operatorname{dim(\text{Sym}^m(\mathcal{X}))}=\operatorname{Tra}(b^2I_{\text{Sym}^m(\mathcal{X})})=\operatorname{Tra}(S^2_{f, \tau})=\int\limits_{G}\int\limits_{G}f_g^{\otimes m} (\tau^{\otimes m} _h) f^{\otimes m} _h(\tau^{\otimes m} _g)\,d\mu_G(g)\,d\mu_G(h).
    \end{align*}	
Therefore 
\begin{align*}
    	&\left|\int\limits_{G}f_g(\tau_g) ^m \,d\mu_G(g)\right|^2=	\left|\int\limits_{G}f_g^{\otimes m}(\tau_g^{\otimes m})\,d\mu_G(g) \right|^2=|\operatorname{Tra}(S_{f, \tau})|^2=\left|b\operatorname{dim(\text{Sym}^m(\mathcal{X}))}\right|^2\\
    	&=\left|\operatorname{dim(\text{Sym}^m(\mathcal{X}))}\right|\left|b^2\operatorname{dim(\text{Sym}^m(\mathcal{X}))}\right|\\
    	&=\left|\operatorname{dim(\text{Sym}^m(\mathcal{X}))}\right|\left|\int\limits_{G}\int\limits_{G}f_g^{\otimes m}(\tau_h^{\otimes m}) f_h^{\otimes m}( \tau_g^{\otimes m}) \,d\mu_G(g)\,d\mu_G(h)\right|\\
    	&=\left|{d+m-1 \choose m}\right|\left|\int\limits_{G}\int\limits_{G}f_g^{\otimes m}( \tau_h^{\otimes m}) f_h^{\otimes m}(\tau_g^{\otimes m})\,d\mu_G(g)\,d\mu_G(h)\right|\\
    	&=\left|{d+m-1 \choose m}\right|\left|\int\limits_{G}\int\limits_{G}f_g(\tau_h)^m f_h(\tau_g)^m\,d\mu_G(g)\,d\mu_G(h)\right|\\
    	&=\left|{d+m-1 \choose m}\right|\left|\int\limits_{G\times G}f_g(\tau_h)^m f_h(\tau_g)^m\,d(\mu_G \times \mu_G)(g,h)\right|\\
    	&=\left|{d+m-1 \choose m}\right|\left| \int\limits_{\Delta}f_g(\tau_g)^{2m}\,d(\mu_G\times \mu_G)(g, g)+\int \limits_{(G\times G)\setminus \Delta}f_g(\tau_h)^m f_h(\tau_g)^m\,d(\mu_G \times \mu_G)(g,h)\right|\\
    	&\leq \left|{d+m-1 \choose m}\right| \max\left \{\left|  \int\limits_{\Delta}f_g(\tau_g)^{2m}\,d(\mu_G\times \mu_G)(g, g) \right|, \left|\int\limits_{(G\times G)\setminus \Delta}f_g(\tau_h)^mf_h(\tau_g)^m\,d(\mu_G\times \mu_G)(g, h)\right| \right\}\\
    	&\leq \left|{d+m-1 \choose m}\right| \max\left \{\left| \int\limits_{\Delta}f_g(\tau_g)^{2m}\,d(\mu_G\times \mu_G)(g, g) \right|, \sup_{g, h \in G, g \neq h}|f_g(\tau_h)^m f_h(\tau_g)^m| \right\}\\
    	&= \left|{d+m-1 \choose m}\right| \max\left \{\left| \int\limits_{\Delta}f_g(\tau_g)^{2m}\,d(\mu_G\times \mu_G)(g, g) \right|, \sup_{g, h \in G, g \neq h}|f_g(\tau_h)f_h(\tau_g)|^{m}\right\}.
    \end{align*}
    Whenever $f_g(\tau_g) =1$ for all $g\in G$, 
    \begin{align*}
    	|\mu(G)|^2\leq  \left|{d+m-1 \choose m}\right| \max\left\{|(\mu_G\times \mu_G)(\Delta)|, \sup_{g, h \in G, g \neq h}|f_g(\tau_h)f_h(\tau_g)|^{m} \right\}.
    \end{align*}
  \end{proof}
\begin{corollary}
(\textbf{Higher Order Continuous p-adic   Welch Bounds})
Let $p$ be a prime and $\mathcal{X}$ be a $d$-dimensional p-adic Hilbert space over $\mathbb{Q}_p$. Let $m \in \mathbb{N}$. Let  $\{\tau_g\}_{g\in G}$ be a   collection in $\mathcal{X}$  satisfying following conditions.
\begin{enumerate}[\upshape (i)]
	\item For every $x\in \text{Sym}^m(\mathcal{X})$ and for every $\phi \in \text{Sym}^m(\mathcal{X})^*$, the map 
	\begin{align*}
		G \ni g \mapsto \langle x, \tau^{\otimes m}_g\rangle \phi(\tau_g^{\otimes m}) \in \text{Sym}^m(\mathcal{X})
	\end{align*}
	is measurable and integrable. 
	\item There exists $b \in \mathbb{Q}_p$ satisfying 
	\begin{align*}
		\int\limits_{G}\langle x, \tau_g^{\otimes m}\rangle  \tau_g^{\otimes m}\,d\mu_G(g) =bx, \quad \forall x \in \text{Sym}^m(\mathcal{X}).
	\end{align*} 
	\item 
\begin{align*}
	\int\limits_{G\times G}|\langle \tau_g, \tau_h \rangle |^{2m}\,d(\mu_G\times \mu_G) (g,h)<\infty.
\end{align*}
\end{enumerate}
Then
\begin{align*}
	\max\left \{\left| \int\limits_{\Delta}\langle \tau_g, \tau_g\rangle  ^{2m} \,d(\mu_G\times \mu_G)(g, g)\right|, \sup_{g, h \in G, g \neq h}|\langle \tau_g, \tau_h\rangle |^{m}\right\}\geq \frac{1}{\left|{d+m-1 \choose m}\right|}\left|\int\limits_{G}\langle \tau_g, \tau_g\rangle ^m\,d\mu_G(g)  \right|^2.	
\end{align*}
In particular, if $\langle \tau_g, \tau_g\rangle  =1$ for all $g\in G$, then
\begin{align*}
	\text{\textbf{(Higher order continuous p-adic   Welch bound)}} \\
	\max \left \{|(\mu_G\times \mu_G)(\Delta)|, \sup_{g, h \in G, g \neq h}|\langle \tau_g, \tau_h\rangle |^{2m} \right\}\geq \frac{|\mu(G)|^2}{\left|{d+m-1 \choose m}\right| }.	
\end{align*}	
\end{corollary}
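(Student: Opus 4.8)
The plan is to obtain this corollary as an immediate specialization of Theorem \ref{WELCHNON2}, applied to the functionals induced by the inner product. For each $g \in G$, define $f_g \in \mathcal{X}^*$ by $f_g \coloneqq \langle \cdot\,, \tau_g\rangle$; this is a bounded linear functional on the $d$-dimensional p-adic Hilbert space $\mathcal{X}$, so $\{f_g\}_{g\in G}$ is a legitimate collection in $\mathcal{X}^*$. With this choice, $f_g(x) = \langle x, \tau_g\rangle$, $f_g(\tau_g) = \langle \tau_g, \tau_g\rangle$, and for $g \neq h$ we have $f_g(\tau_h) f_h(\tau_g) = \langle \tau_h, \tau_g\rangle \langle \tau_g, \tau_h\rangle$. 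Since the p-adic inner product is a symmetric bilinear form (there is no conjugation on $\mathbb{Q}_p$), $\langle \tau_h, \tau_g\rangle = \langle \tau_g, \tau_h\rangle$, whence $f_g(\tau_h) f_h(\tau_g) = \langle \tau_g, \tau_h\rangle^2$ and therefore $|f_g(\tau_h) f_h(\tau_g)|^m = |\langle \tau_g, \tau_h\rangle|^{2m}$.

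Next I would verify that hypotheses (i)--(iii) of the corollary are exactly hypotheses (i)--(iii) of Theorem \ref{WELCHNON2} under this substitution. The key point is the compatibility of the $m$-fold tensor functional $f_g^{\otimes m}$ with the inner product on $\text{Sym}^m(\mathcal{X})$: equipping $\mathcal{X}^{\otimes m}$ with the tensor inner product, one has, on decomposable tensors,
\begin{align*}
	\langle x_1 \otimes \cdots \otimes x_m, \tau_g^{\otimes m}\rangle = \prod_{i=1}^{m} \langle x_i, \tau_g\rangle = \prod_{i=1}^{m} f_g(x_i) = f_g^{\otimes m}(x_1 \otimes \cdots \otimes x_m),
\end{align*}
and by linearity $\langle x, \tau_g^{\otimes m}\rangle = f_g^{\otimes m}(x)$ for all $x \in \text{Sym}^m(\mathcal{X})$. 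Consequently, the measurability/integrability map in (i), the scalar-operator condition $\int_G \langle x, \tau_g^{\otimes m}\rangle \tau_g^{\otimes m}\,d\mu_G(g) = bx$ in (ii), and the finiteness condition in (iii) coincide verbatim with the corresponding hypotheses of Theorem \ref{WELCHNON2}.

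Having matched the hypotheses, I would invoke Theorem \ref{WELCHNON2} directly. Its conclusion,
\begin{align*}
	\max\left\{\left|\int_\Delta f_g(\tau_g)^{2m}\,d(\mu_G\times\mu_G)(g,g)\right|, \sup_{g\neq h}|f_g(\tau_h)f_h(\tau_g)|^m\right\} \geq \frac{1}{\left|{d+m-1\choose m}\right|}\left|\int_G f_g(\tau_g)^m\,d\mu_G(g)\right|^2,
\end{align*}
becomes, after the substitutions $f_g(\tau_g) = \langle\tau_g,\tau_g\rangle$ and $|f_g(\tau_h)f_h(\tau_g)|^m = |\langle\tau_g,\tau_h\rangle|^{2m}$, precisely the asserted bound. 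The particular case $\langle\tau_g,\tau_g\rangle = 1$ follows by the same specialization used in Theorem \ref{WELCHNON2}: the diagonal integral reduces to $(\mu_G\times\mu_G)(\Delta)$ and $\int_G \langle\tau_g,\tau_g\rangle^m\,d\mu_G(g) = \mu(G)$. I expect the only genuine checkpoint to be the tensor-inner-product identification in the second paragraph; once $f_g^{\otimes m} = \langle\cdot\,, \tau_g^{\otimes m}\rangle$ is confirmed, everything else is a transcription, and no new non-Archimedean estimate beyond those already used in Theorem \ref{WELCHNON2} is required.
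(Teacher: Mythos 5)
Your proposal is correct and is precisely the intended argument: the paper states this corollary without proof, as the immediate specialization of Theorem \ref{WELCHNON2} to $f_g=\langle\cdot\,,\tau_g\rangle$, using the symmetry of the p-adic bilinear form and the identification $f_g^{\otimes m}=\langle\cdot\,,\tau_g^{\otimes m}\rangle$ exactly as you do. Note only that your derivation yields the exponent $2m$ on $\sup_{g,h\in G,\,g\neq h}|\langle\tau_g,\tau_h\rangle|$ in the general bound, whereas the corollary as printed has $m$ there; since the paper's own ``in particular'' clause uses $2m$, the printed $m$ is evidently a typo and your version is the one that actually follows from Theorem \ref{WELCHNON2}.
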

 Using Theorem \ref{WELCHNONQ} and Corollary \ref{CORP1} we ask the  following questions.
\begin{question}\label{Q1}
		\textbf{Let $p$ be a prime and $\mathcal{X}$ be a $d$-dimensional p-adic  Banach space over $\mathbb{Q}_p$. For which locally compact  group $G$ with measurable diagonal $\Delta$,   there exist  a collection $\{\tau_g\}_{g\in G}$  in $\mathcal{X}$ and a collection $\{f_g\}_{g\in G}$   in $\mathcal{X}^*$   satisfying the following. 
		\begin{enumerate}[\upshape(i)]
			\item For every $x\in \mathcal{X}$ and for every $\phi \in \mathcal{X}^*$, the map 
			\begin{align*}
				G \ni g \mapsto f_g(x)\phi(\tau_g) \in \mathcal{X}
			\end{align*}
			is measurable and integrable. 
			\item There exists $b \in \mathbb{Q}_p$ such that 
			\begin{align*}
				\int\limits_{G} f_g(x)\tau_g\,d\mu_G(g)=bx, \quad \forall x \in \mathcal{X}.
			\end{align*}
			\item 
		\begin{align*}
			\int\limits_{G\times G}|f_g(\tau_h)f_h(\tau_g)|\,d(\mu_G\times \mu_G) (g,h)<\infty.
		\end{align*}
			\item $f_g(\tau_g) =1$ for all $g\in G$.
			\item 
			\begin{align*}
				\max\left\{|(\mu_G\times \mu_G)(\Delta)|, \sup_{g, h \in G, g \neq h}|f_g(\tau_h)f_h(\tau_g)| \right\}= \frac{|\mu(G)|^2}{|d|}.	
			\end{align*}
			\item $\|f_g\| =1$, $\|\tau_g\| =1$ for all $g\in G$.
	\end{enumerate}}
\end{question}
\begin{question}\label{Q2}
	\textbf{Let $p$ be a prime and $\mathcal{X}$ be a $d$-dimensional p-adic  Hilbert space over $\mathbb{Q}_p$. For which locally compact  group $G$ with measurable diagonal $\Delta$,   there exists  a collection $\{\tau_g\}_{g\in G}$  in $\mathcal{X}$    satisfying the following. 
	\begin{enumerate}[\upshape (i)]
		\item For every $x\in \mathcal{X}$ and for every $\phi \in \mathcal{X}^*$, the map 
		\begin{align*}
			G \ni g \mapsto \langle x, \tau_g \rangle \phi(\tau_g) \in \mathcal{X}
		\end{align*}
		is measurable and integrable. 
		\item There exists $b \in \mathbb{Q}_p$ such that 
		\begin{align*}
			\int\limits_{G} \langle x, \tau_g \rangle\tau_g\,d\mu_G(g)=bx, \quad \forall x \in \mathcal{X}.
		\end{align*}
		\item 
	\begin{align*}
		\int\limits_{G\times G}|\langle \tau_g, \tau_h \rangle |^2\,d(\mu_G\times \mu_G) (g,h)<\infty.
	\end{align*}
		\item $\langle \tau_g, \tau_g\rangle =1$ for all $g\in G$.
		\item 
		\begin{align*}
			\max\left\{|(\mu_G\times \mu_G)(\Delta)|, \sup_{g, h \in G, g \neq h}|\langle \tau_g, \tau_h \rangle |^2 \right\}= \frac{|\mu(G)|^2}{|d|}.	
		\end{align*}
		\item  $\|\tau_g\| =1$ for all $g\in G$.
\end{enumerate}}		
\end{question}
A particular case of Questions \ref{Q1} and \ref{Q2} are  following continuous  p-adic  Zauner conjectures.
\begin{conjecture}\label{NZ} \textbf{(Continuous p-adic Functional Zauner Conjecture)
	Let $p$ be a prime. Given locally compact group $G$ and for  each $d\in \mathbb{N}$, 	there exist  a collection $\{\tau_g\}_{g\in G}$  in $\mathbb{Q}_p^d$ (w.r.t. any non-Archimedean norm) and a collection $\{f_g\}_{g\in G}$   in $(\mathbb{Q}_p^d)^*$   satisfying the following. 
\begin{enumerate}[\upshape(i)]
	\item For every $x\in \mathbb{Q}_p^d$ and for every $\phi \in (\mathbb{Q}^d_p)^*$, the map 
	\begin{align*}
		G \ni g \mapsto f_g(x)\phi(\tau_g) \in \mathbb{Q}_p^d
	\end{align*}
	is measurable and integrable. 
	\item There exists $b \in \mathbb{Q}_p$ such that 
	\begin{align*}
		\int\limits_{G} f_g(x)\tau_g\,d\mu_G(g)=bx, \quad \forall x \in \mathbb{Q}_p^d.
	\end{align*}
	\item 
\begin{align*}
	\int\limits_{G\times G}|f_g(\tau_h)f_h(\tau_g)|\,d(\mu_G\times \mu_G) (g,h)<\infty.
\end{align*}
	\item $f_g(\tau_g) =1$ for all $g\in G$.
	\item 
	\begin{align*}
		|(\mu_G\times \mu_G)(\Delta)|= |f_g(\tau_h)f_h(\tau_g)| = \frac{|\mu(G)|^2}{|d|}, \quad \forall g, h \in G, g \neq h.	
	\end{align*}
	\item $\|f_g\| =1$, $\|\tau_g\| =1$ for all $g\in G$.
\end{enumerate}}	
\end{conjecture}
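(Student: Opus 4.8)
Since this is a conjecture --- the $p$-adic functional analogue of Zauner's (SIC-POVM) existence problem --- rather than a theorem, what I can offer is a strategy for producing, for a given $G$ and $d$, the required pair of collections, together with a reading of which of the six conditions is the genuine bottleneck. The natural starting point is the equality analysis of Theorem \ref{WELCHNONQ}: conditions (i)--(iv) are exactly its hypotheses in normalized form, condition (v) asks that the resulting bound be attained with equality, and condition (vi) is an extra normalization of the fiducial data. Because (ii) forces the frame operator to be a scalar $bI$, the step $|bd|^2 = |d|\,|b^2d|$ in that proof is automatically an equality in the ultrametric setting, and one computes $b = \mu(G)/d$, so $|\operatorname{Tra}(S_{f,\tau}^2)| = |\mu(G)|^2/|d|$. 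The only remaining demand of (v) is therefore that the diagonal mass $|(\mu_G\times\mu_G)(\Delta)|$ and every off-diagonal value $|f_g(\tau_h)f_h(\tau_g)|$ equal this common number $|\mu(G)|^2/|d|$; the conjecture thus reduces to building a scalar (tight) system whose cross-terms are ``$p$-adically equiangular'' at exactly this valuation.

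The construction I would attempt is the group-frame (geometrically uniform) one, mirroring the Weyl--Heisenberg orbits used for complex SIC-POVMs. Starting from an isometric representation $\pi$ of $G$ on $\mathbb{Q}_p^d$, I would fix a fiducial vector $\tau_e$ with $\|\tau_e\|=1$ and a fiducial functional $f_e$ with $\|f_e\|=1$ and $f_e(\tau_e)=1$, and set $\tau_g \coloneqq \pi(g)\tau_e$ and $f_g \coloneqq f_e\circ\pi(g)^{-1}$. Then $f_g(\tau_h)$ depends only on $g^{-1}h$, so the off-diagonal products $f_g(\tau_h)f_h(\tau_g)$ are governed by a single orbit function of $g^{-1}h$; tightness, condition (ii), should then follow from an averaging/Schur-orthogonality argument for $\pi$, giving $S_{f,\tau}=bI$. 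Conditions (i) and (iii) become measurability and integrability statements about that one orbit function, to be arranged by choosing $\pi$ and the fiducials with enough regularity, while (iv) and (vi) hold by construction.

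The hard part --- and the reason this stays a conjecture --- is twofold. First, one must exhibit a fiducial for which the orbit function has constant $p$-adic absolute value off the diagonal, equal to the prescribed $|\mu(G)|^2/|d|$: this is the exact analogue of finding SIC fiducials, and even over $\mathbb{Q}_p$ there is no general mechanism that guarantees such a vector in every dimension. Second, the measure constraint in (v) couples the Haar measure of $G$ to $d$ through $|(\mu_G\times\mu_G)(\Delta)| = |\mu(G)|^2/|d|$, so the fiducial problem cannot be decoupled from the choice and normalization of $\mu_G$; reconciling the valuation of the diagonal mass with that of the cross-terms, uniformly in $d$, is where I expect the argument to resist. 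A sensible first test would be the compact abelian case, where $\pi$ splits into characters and the orbit function is explicit, to check whether the two valuation conditions can be met simultaneously before attacking general $G$.
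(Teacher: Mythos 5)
The statement you were asked to prove is posed in the paper as a \emph{conjecture}: the paper supplies no proof of it, only the observation that it is a particular case of Question \ref{Q1}, i.e.\ the extremal/equality case of Theorem \ref{WELCHNONQ}. There is therefore no ``paper's own proof'' to compare your attempt against, and your submission --- correctly --- does not claim to be a proof either. Your reading of how the conjecture sits relative to Theorem \ref{WELCHNONQ} is accurate: conditions (i)--(iii) are that theorem's hypotheses, (iv) is the normalization $f_g(\tau_g)=1$, and with $bd=\int_G f_g(\tau_g)\,d\mu_G(g)=\mu(G)$ one indeed gets $b=\mu(G)/d$ and $|\operatorname{Tra}(S_{f,\tau}^2)|=|b^2d|=|\mu(G)|^2/|d|$ (note this equality is just multiplicativity of the $p$-adic absolute value, not anything specific to the ultrametric inequality). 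Your reduction of (v) to a ``$p$-adic equiangularity'' condition on the off-diagonal products, matched in valuation to the diagonal mass $|(\mu_G\times\mu_G)(\Delta)|$, is also a faithful description of what the conjecture demands, and the group-orbit construction you sketch is a reasonable line of attack. But none of this settles the conjecture, and the two obstacles you identify (existence of a fiducial with constant off-diagonal valuation, and the coupling of $\mu_G$ to $d$ through the diagonal mass) are precisely where a proof would have to do real work; as a verification exercise the only conclusion available is that the statement remains open in the paper and your proposal leaves it open as well.
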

\begin{conjecture}\label{NZ2}
\textbf{(Continuous p-adic  Zauner Conjecture)
	Let $p$ be a prime. Given locally compact group $G$ and for  each $d\in \mathbb{N}$, 	there exists  a collection $\{\tau_g\}_{g\in G}$  in $\mathbb{Q}_p^d$    satisfying the following. 
	\begin{enumerate}[\upshape(i)]
		\item For every $x\in \mathbb{Q}_p^d$ and for every $\phi \in (\mathbb{Q}^d_p)^*$, the map 
		\begin{align*}
			G \ni g \mapsto \langle x , \tau_g \rangle\phi(\tau_g) \in \mathbb{Q}_p^d
		\end{align*}
		is measurable and integrable. 
		\item There exists $b \in \mathbb{Q}_p$ such that 
		\begin{align*}
			\int\limits_{G} \langle x, \tau_g \rangle\tau_g\,d\mu_G(g)=bx, \quad \forall x \in \mathbb{Q}_p^d.
		\end{align*}
		\item 
	\begin{align*}
		\int\limits_{G\times G}|\langle \tau_g, \tau_h \rangle |^2\,d(\mu_G\times \mu_G) (g,h)<\infty.
	\end{align*}
		\item $\langle \tau_g , \tau_g \rangle =1$ for all $g\in G$.
		\item 
		\begin{align*}
			|(\mu_G\times \mu_G)(\Delta)|= |\langle \tau_g , \tau_h \rangle|^2 = \frac{|\mu(G)|^2}{|d|}, \quad \forall g, h \in G, g \neq h.	
		\end{align*}
		\item $\|\tau_g\| =1$ for all $g\in G$.
\end{enumerate}}		
\end{conjecture}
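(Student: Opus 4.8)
The plan is to treat Conjecture \ref{NZ2} as an existence problem for the equality case of the first-order continuous $p$-adic Welch bound (Corollary \ref{CORP1}), and to attack it by a group-covariant construction. First I would analyze what equality in Corollary \ref{CORP1} forces. Tracing the proof of Theorem \ref{WELCHNONQ}, the only inequality used is the non-Archimedean identity $|bd|^2 = |d|\,|b^2 d|$ together with the ultrametric bound $|A + B| \le \max\{|A|, |B|\}$ applied to the splitting of $\operatorname{Tra}(S_{\tau}^2)$ into its diagonal and off-diagonal parts. Hence conditions (ii)--(v) of the conjecture are exactly the equality constraints: (ii) says $S_\tau = bI$ (tightness), (iv) and (vi) are the normalizations, and (v) says that both the diagonal-measure term $|(\mu_G\times\mu_G)(\Delta)|$ and every off-diagonal inner product $|\langle \tau_g,\tau_h\rangle|^2$ attain the common value $|\mu(G)|^2/|d|$.

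Second, I would realize $\{\tau_g\}_{g\in G}$ as a single orbit. Choose an isometric representation $\pi\colon G \to GL(\mathbb{Q}_p^d)$ preserving the $p$-adic inner product, fix a fiducial vector $\tau_0$ with $\langle \tau_0,\tau_0\rangle = 1$ and $\|\tau_0\| = 1$, and set $\tau_g \coloneqq \pi(g)\tau_0$. Covariance makes conditions (iv) and (vi) automatic, since $\langle \tau_g,\tau_g\rangle = \langle \tau_0,\tau_0\rangle$ and $\|\tau_g\| = \|\tau_0\|$. Moreover $S_\tau$ intertwines $\pi$ with itself, so if $\pi$ is irreducible a $p$-adic Schur-type argument forces $S_\tau = bI$, giving (ii); the scalar $b$ is then recovered from the trace identity $bd = \int_G \langle \tau_g,\tau_g\rangle\, d\mu_G(g) = \mu(G)$ occurring in the proof of Theorem \ref{WELCHNONQ}, so $b = \mu(G)/d$.

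Third, the equiangularity condition (v) reduces to a condition on the fiducial vector alone: since $\langle \tau_g,\tau_h\rangle = \langle \tau_0, \pi(g^{-1}h)\tau_0\rangle$, I must produce $\tau_0$ with $|\langle \tau_0, \pi(k)\tau_0\rangle|^2 = |\mu(G)|^2/|d|$ for all $k \ne e$. This is the $p$-adic analogue of a SIC/equiangular-tight-frame fiducial equation, i.e.\ a finite system of polynomial equations over $\mathbb{Q}_p$. In parallel I would fix the group-and-measure data so that the diagonal constraint $|(\mu_G\times\mu_G)(\Delta)| = |\mu(G)|^2/|d|$ holds; for a compact group carrying a suitably normalized Haar measure this becomes a single valuation condition relating $\mu(\Delta)$, $\mu(G)$, and $|d|_p$.

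The hard part will be precisely the step that keeps the classical Zauner conjecture open: producing the fiducial vector $\tau_0$ solving the equiangularity equations in every dimension $d$. Here the $p$-adic setting is double-edged. On one hand, Hensel's lemma and the local structure of $\mathbb{Q}_p$ may permit lifting a solution of the polynomial system modulo $p$ to an exact fiducial, potentially circumventing the transcendence obstructions encountered over $\mathbb{R}$ and $\mathbb{C}$. On the other hand, the valuation rigidity $|\langle \tau_0,\pi(k)\tau_0\rangle|^2 = |\mu(G)|^2/|d|$ must hold with exact equality in $|\cdot|_p$ simultaneously with the measure condition (v), and guaranteeing both for all $G$ and all $d$ is exactly the content that resists proof, which is why the statement is posed as a conjecture rather than as a theorem.
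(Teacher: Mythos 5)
You were asked about Conjecture \ref{NZ2}, which the paper does not prove: it is posed as an open problem, obtained by isolating the equality case of Corollary \ref{CORP1} (itself a specialization of Theorem \ref{WELCHNONQ}), exactly as a particular case of Questions \ref{Q1} and \ref{Q2}. Your opening analysis --- that conditions (ii)--(v) are the equality constraints in the ultrametric estimate $|A+B|\leq \max\{|A|,|B|\}$ applied to the diagonal/off-diagonal splitting of $\operatorname{Tra}(S_\tau^2)$ --- faithfully reproduces the paper's motivation for stating the conjecture. But what follows is a research program, not a proof, and you concede as much in your final paragraph: the step of producing a fiducial vector $\tau_0$ with $|\langle \tau_0,\pi(k)\tau_0\rangle|^2=|\mu(G)|^2/|d|$ for all $k\neq e$ \emph{is} the conjecture, so your covariant ansatz reduces the statement to itself (and only for orbit-type families, whereas the conjecture quantifies over every locally compact $G$ and every $d$, with no guarantee that a form-preserving representation of $G$ on $\mathbb{Q}_p^d$ even exists for a given pair).

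Beyond the deferred existence step, two intermediate claims in your outline would fail as stated. First, the Schur argument: $\mathbb{Q}_p$ is not algebraically closed, so irreducibility of $\pi$ only yields that the commutant of $\pi$ is a finite-dimensional division algebra over $\mathbb{Q}_p$; the conclusion $S_\tau=bI$ with $b\in\mathbb{Q}_p$, which is hypothesis (ii) of the conjecture and the engine of the trace computation in Theorem \ref{WELCHNONQ}, does not follow from irreducibility alone. Moreover a $p$-adic inner product is a symmetric bilinear form without positivity, so the self-adjointness-style reasoning implicit in ``isometric representation preserving the inner product'' needs separate justification. Second, the measure-theoretic bookkeeping: the paper's integrals are weak non-Archimedean Riemann integrals of $\mathbb{Q}_p$-valued maps, so $\mu_G$ must be interpreted as a $\mathbb{Q}_p$-valued measure (this is why the bounds involve $|\mu(G)|$); the ordinary real-valued Haar measure you invoke cannot be substituted, $\mu(G)$ and $(\mu_G\times\mu_G)(\Delta)$ need not be defined for noncompact or nondiscrete $G$, and the constraint $|(\mu_G\times\mu_G)(\Delta)|=|\mu(G)|^2/|d|$ is a genuine restriction on the admissible $(G,\mu_G)$ rather than a single normalization you can always arrange. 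Your Hensel's lemma remark is plausible as a heuristic for why the $p$-adic setting might be more tractable than the complex one, but no polynomial system is written down and no solvability modulo $p$ is established, so it carries no probative weight.
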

Theorem \ref{LEVENSTEINBOUND}, Theorem \ref{WELCHNONQ}  and Corollary \ref{CORP1} give  the following problems.
\begin{question}
	\textbf{Whether there is a   continuous p-adic functional version of Theorem \ref{LEVENSTEINBOUND}? In particular, does there exists a   version of}
	\begin{enumerate}[\upshape(i)]
		\item \textbf{continuous p-adic functional Bukh-Cox bound?}
		\item \textbf{continuous p-adic functional Orthoplex/Rankin bound?}
		\item \textbf{continuous p-adic functional Levenstein bound?}
		\item \textbf{continuous p-adic functional Exponential bound?}
	\end{enumerate}		
\end{question}
\begin{question}
	\textbf{Whether there is a   continuous p-adic  version of Theorem \ref{LEVENSTEINBOUND}? In particular, does there exists a   version of}
	\begin{enumerate}[\upshape(i)]
		\item \textbf{continuous p-adic  Bukh-Cox bound?}
		\item \textbf{continuous p-adic Orthoplex/Rankin bound?}
		\item \textbf{continuous p-adic  Levenstein bound?}
		\item \textbf{continuous p-adic  Exponential bound?}
	\end{enumerate}		
\end{question}

 \bibliographystyle{plain}
 \bibliography{reference.bib}

\end{document}